\documentclass[12pt,a4paper,reqno]{amsart}
\usepackage[utf8]{inputenc}
\usepackage[english]{babel}
\usepackage[T1]{fontenc}
\usepackage{amsmath}
\usepackage{amsfonts}
\usepackage{amsthm}
\usepackage{amssymb}
\usepackage{graphicx}
\usepackage{lmodern}
\usepackage{ bbold }
\usepackage{leftindex}
\usepackage{caption}
\usepackage{comment}
\usepackage{hyperref}
\usepackage{titling}
\usepackage{authblk}
\usepackage{mathabx}
\usepackage{color}
\usepackage{esint}
\usepackage{calrsfs}
\usepackage{mathrsfs}
\usepackage{stmaryrd}
\usepackage{amsaddr}
\usepackage{array}
\usepackage[misc]{ifsym}
\usepackage[pagewise]{lineno}
\usepackage[toc,page]{appendix}
\usepackage[left=3cm,right=3cm,top=3cm,bottom=3cm]{geometry}

\newtheorem{theoreme}{Theorem}
\newtheorem{lemme}{Lemma}
\newtheorem{proposition}{Proposition}

\newtheorem{definition}{Definition}

\newcommand{\bea}{\begin{eqnarray}}
\newcommand{\eea}{\end{eqnarray}}
\newcommand{\beas}{\begin{eqnarray*}}
\newcommand{\eeas}{\end{eqnarray*}}

\def\[{\left[}
\def\]{\right]}
\def\<{\langle}
\def\>{\rangle}
\def\({\left(}
\def\){\right)}

\def\dd{\textnormal{d}}

\def\o{\otimes}
\def\N{{\mathbb N}}
\def\R{{\mathbb R}}

\def\Z{{\mathbb Z}}
\def\T{{\mathbb T}}

\numberwithin{equation}{section}
\numberwithin{theoreme}{section}
\numberwithin{lemme}{section}

\numberwithin{proposition}{section}
\numberwithin{algo}{section}
\numberwithin{definition}{section}
\numberwithin{remark}{section}

\title{Convergence of a Multi-fluid scheme for the Vlasov--Poisson system}

\date{}

\author{Mehdi Badsi$^1$, Nicolas Crouseilles$^2$ \and Daniel Han-Kwan$^3$}

\thanks{$^1$Nantes Université, Inria centre de l'université de Rennes (Mingus team) and Laboratoire de Mathématiques Jean Leray UMR CNRS 6310, 2 Chemin de la Houssinière BP 92208, 44322 Nantes Cedex 3, France. (mehdi.badsi@univ-nantes.fr) }
\thanks{$^2$Univ Rennes, Inria centre de l'universit\'e de Rennes (Mingus team) and IRMAR UMR CNRS 6625, F-35042 Rennes, France \& ENS Rennes, France. (nicolas.crouseilles@inria.fr)}
\thanks{$^3$Nantes Universit\'e, CNRS,  Laboratoire de Math\'ematiques Jean Leray UMR CNRS 6310, 2 Chemin de la Houssini\`ere BP 92208, 44322 Nantes Cedex 3, France. 
  (daniel.han-kwan@univ-nantes.fr).}

\keywords{Vlasov--Poisson equations, \textit{multi-fluid} approximation, remapping, convergence estimate}
\subjclass{MSC 35A01, MSC 35A35, MSC 65M15, MSC 82D10, MSC 76T99}

\begin{document}
\maketitle

\begin{abstract}
In this paper, we introduce a novel multi-fluid approximation scheme for the one-dimensional Vlasov--Poisson system, that is relevant for arbitrarily long time intervals.
For low regularity solutions, we establish an error estimate in the Wasserstein-$1$ distance, obtaining 
a convergence rate of order $\mathcal{O}(h^{\frac{2}{3}})$, where $h$ denotes the velocity grid size. This establishes the consistency, as the grid size tends to zero, between the pressureless Euler--Poisson  and the Vlasov--Poisson systems. Numerical illustrations are given to illustrate the efficiency of our approach.
\end{abstract}

\tableofcontents
\section{Introduction}

\subsection{General context}
One of the most widely used and efficient models for describing non-collisional plasmas is the Vlasov--Poisson system. It governs the evolution of the distribution function of a particle species in phase space, whose characteristic velocities are assumed to be much smaller than the speed of light. Despite its widespread use, the accurate and stable numerical simulation of the Vlasov--Poisson system remains a challenging problem. Consequently, it has received considerable attention over the past decades, with numerous works addressing a wide range of issues, including the preservation of physical conservation laws, error analysis, and model reduction techniques aimed at improving computational efficiency  \cite{cheng, christlieb_va, filbet, sonnendrucker, filbet,gamba_dg, CMS-jcp, lapenta_sl_va, francis-bessemoulin, francis-bessemoulin2,besse-bag,crestetto-bag,besse-michel,CamposPinto,gempic}. Regarding accuracy, a major difficulty in the numerical approximation of the Vlasov--Poisson system stems from the increasingly oscillatory behavior of the distribution function in the velocity variable. This phenomenon is reflected in the growth of its velocity derivatives \cite{Ukai-Okabe}. Inherent to the free transport equation, this progressive development of increasingly fine-scale structures is commonly referred to as \textit{filamentation}. This difficulty partly explains the loss of accuracy observed in Particle-in-Cell methods, which is worsened by the intrinsic statistical noise introduced during the initialization of the method \cite{MonteCarlo-method,cottet-raviart}. Semi-Lagrangian schemes and their variants have emerged as an effective compromise between accuracy and computational cost \cite{sonnendrucker,besse-michel,cdm,campos-charles-ltp}, making them a method of choice for the numerical simulation of the Vlasov--Poisson system. More recently, increasing attention has been devoted to numerical methods based on moment representations, such as Hermite-based decompositions \cite{Despres,francis-bessemoulin}. These approaches are particularly appealing from a practical point of view, as they offer a reduced-dimensional representation of the problem. The convergence of these methods is a difficult problem, the only convergence result we are aware of is \cite{francis-bessemoulin}.

\subsection{The multi-fluid approximation scheme}
The approach developed in this paper is different and is inspired from ideas that date back to the early 90's by Zakharov, Brenier, Grenier \cite{Zak, Brenier, Grenier1999} and is based on the comprehensive theory recently developed by Baradat, Ertzbischoff and Han-Kwan \cite{BEH,Bardat20} which uses an exact representation of the distribution function which leads to a point of view that is in between the Lagrangian and Eulerian formulation of the Vlasov equation, the so-called {\it multiphasic formulation}. A seemingly related, yet fundamentally different, class of more heuristic approaches consists of the so-called Multi Water-Bag and Multi-Stream methods \cite{10.1093/mnras/stu2308,besse-bag,crestetto-bag,Ghizoo}. These methods approximate the distribution function through an ansatz based on a superposition of level sets of the distribution function. Although these approaches are natural and closely related in spirit to the \textit{multi-fluid} approximation considered here, they cease to be applicable once a level set can no longer be represented as the graph of a single-valued function. In particular, this restriction prevents their use in the long-time asymptotic regime and in configurations where distinct graphs intersect.

The method we propose is based on exact solutions of the Vlasov--Poisson system that can be represented  as a discrete sum of monokinetic solutions, referred to as \textit{multi-fluid} solutions. Such solutions are a particular case of the general multiphasic framework of \cite{BEH}. In contrast to the approaches described above, for fairly general bounded and integrable initial data, with a mild decay assumption, we provide an explicit construction of a consistent \textit{multi-fluid} approximation based on a uniform velocity grid of size $h$. 
This construction leads to the study of a coupled pressureless Euler--Poisson system whose initial fluid velocities are constant. A fundamental inherent issue comes from the fact that the solutions of this system are in general bound to blow up in finite time, due to the formation of shocks for the Euler--Poisson dynamics (see e.g. \cite{ELT,CCTT,CKKT} and references therein).

In this work, we develop a numerical strategy to overcome this curse, extend the approximation beyond blow-up, and ultimately reach arbitrary long times. To this end, we introduce what we call the \textit{remapping} strategy: the idea consists, before the formation of a shock, in ``projecting'' the solution onto the uniform velocity grid, through a suitable \textit{remapping} operator, and restart the dynamics from the new resulting initial data. The remapping operator is designed so that the charge density is preserved, and is stable with respect to the Wasserstein-1 distance.

The whole numerical strategy is in principle implementable in any dimension. However, we need to restrict to the one-dimensional Vlasov--Poisson system, in order to obtain a proof of consistency; the extension to higher dimension remains open. 
In the one-dimensional case, we are able to prove that the remapping procedure can be iterated over arbitrarily long time intervals, not directly for the Vlasov--Poisson (VP)  system, but for an auxiliary regularized version, whose solution is shown to be close to the solution of the VP system. 
Our analysis finally establishes convergence of the resulting \textit{multi-fluid} scheme as the grid size tends to zero, under low regularity assumptions on the initial data. 
The proof relies on a weak--strong stability estimate in the Wasserstein-1 distance due to Hauray \cite{Hauray}, together with an $L^\infty$ estimate on the electric field. Both estimates are specific to the one-dimensional setting. The overall strategy of the proof consists of controlling two distinct sources of error: 
\begin{itemize}
    \item (Error between VP and regularized VP) The first is the consistency error between the Vlasov--Poisson system and the regularized Vlasov--Poisson system, in which the Coulomb kernel is suitably mollified near the origin. 
    \item (Error between multi-fluid regularized VP and regularized VP) The second is the approximation error between the regularized Vlasov--Poisson system and its \textit{multi-fluid} approximation.
\end{itemize}
To estimate the latter, which is the main point of this work, we propagate the error introduced at the initial time using the stability estimate up to the first remapping time. The remapping procedure then introduces an additional error of order $h$. Starting from the resulting approximation, we restart the dynamics and propagate this new error up to the next remapping time. Iterating this procedure yields, by induction, a global error estimate over arbitrary time intervals. Optimizing the regularization parameter leads to the convergence rate $\mathcal{O}(h^{2/3})$.
We emphasize that this rate holds for weak solutions with no regularity assumption at all; in particular, large gradients in $x$ or $v$ are allowed. 

\subsection{Organization of the paper}
The paper is organized as follows. Section~\ref{sec:VP} is a quick introduction to the one-dimensional Vlasov--Poisson system and its regularized version.
In Section \ref{sec:multi-fluid_solutions}, we introduce the class of \textit{multi-fluid} solutions to the Vlasov--Poisson system and recall the appropriate functional framework for establishing their existence. In particular, we derive a lower bound on the lifespan of these solutions under the assumption that the blow-up functional remains bounded by a prescribed parameter.
In Section \ref{sec:wasserstein-stab}, we recall the weak-strong stability estimate we need to propagate the errors. In Section \ref{sec:def_multi-fluid_approx_main_result}, we define the \textit{multi-fluid} approximation and state our main convergence result. The proof of the main result is in Section \ref{sec:proof_main_result}. Eventually in Section \ref{sec:numerical_results}, we use our approach to simulate the two-stream instability which is known to be challenging for this class of methods \cite{crestetto-bag}.

\section{The Vlasov--Poisson system}
\label{sec:VP}
Our main concern in this work is the approximation of the solution of the Vlasov--Poisson system posed on $[0,T] \times \T \times \R:$
\begin{equation} \label{Vlasov--Poisson_eq}
\left|
\begin{array}{cll}
& \partial_{t} f + v \partial_{x} f + E \partial_{v} f = 0, \\
&  E(t,\cdot) = K \star \Big( \displaystyle \int_{\R} f(t,\cdot,v) \dd v -1 \Big), \\
\end{array}
\right. 
\end{equation}
where $\T = \R / L \Z$ is the periodic torus of size $L > 0$, $T > 0$ is a fixed horizon of time,
\begin{itemize}
\item $K \in L^{\infty}(\T)$ is the Coulomb kernel given on $[-\frac{L}{2},\frac{L}{2})$ by
\begin{align} \label{coulomb_kernel}
    K(x) = \frac{1}{L} \begin{cases}
    -\frac{L}{2} -x & \textnormal{ if } x \in [-\frac{L}{2},0), \\
    +\frac{L}{2} - x & \textnormal{ if } x \in (0, \frac{L}{2}).
    \end{cases}
\end{align}
    \item $f(t,x,v) \geq 0$ is the distribution function of a species of ions (with positive charge) in the phase space $ \T \times \R$ at time $t \in [0,T]$, and
    \item $E(t,x) \in \R$ is the self-consistent electric field induced by the whole distribution of charges, the density of ions $\rho_{f}(t,x) = \int_{\R} f(t,x,\dd v)  $ and a fixed neutralizing background of electrons of density $1$.
\end{itemize}
The system is supplemented with an initial condition 
\begin{equation} \label{f_0}
f(0,\cdot,\cdot) = f_{0},
\end{equation}
where $f_{0}$ is assumed (without loss of generality) to be a probability measure on $\T \times \R$
\begin{align}
\label{proba}
    \frac{1}{| \T |} \int_{\T \times \R}   f_{0}( \dd x, \dd v) = 1.
\end{align}
The set of equations \eqref{Vlasov--Poisson_eq}-\eqref{proba} will be denoted (VP). In our analysis, we also consider an approximate Vlasov–Poisson system in which the Coulomb kernel \eqref{coulomb_kernel} is regularized in a neighborhood of the origin. More precisely, we introduce the regularized kernel $K_{\delta}$, defined as the $L$-periodic extension to $\mathbb{R}$ of the function given on $\left[-\frac{L}{2},\frac{L}{2}\right)$ by
\begin{align}
    K_{\delta}(x) =  -\frac{x}{L} + \begin{cases}
    -\frac{1}{2} & \textnormal{ if } x \in [-\frac{L}{2}, -\frac{\delta}{2}], \\
    \: x/\delta & \textnormal{ if } x \in (-\frac{\delta}{2}, \frac{\delta}{2}),\\
    +\frac{1}{2} & \textnormal{ if } x \in [\frac{\delta}{2}, \frac{L}{2}), 
    \end{cases}
\end{align}
where $0 < \delta < L$ is a fixed numerical parameter.
It belongs to $W^{1,\infty}(\T)$ and satisfies the bounds
\begin{align}
    & \| K_{\delta} \|_{L^{\infty}(\T)} = \frac{1}{2}\quad \textnormal{ and }\quad  \| K'_{\delta} \|_{L^{\infty}(\T)} = \frac{1}{\delta}. \label{bound_Lipschitz_K}
\end{align}
Moreover, for any $1 \leq p < +\infty$, there holds
\begin{align}
   & K_{\delta} \to K \textnormal{ in } L^{p}(\T) \quad  \textnormal{ as } \quad \delta \to 0^{+}.
\end{align}
We denote by $\textnormal{(VP)}_{\delta}$ the set of equations \eqref{Vlasov--Poisson_eq}-\eqref{proba} when the Coulomb kernel $K$ is replaced by the regularized kernel $K_{\delta}$ (with the convention that $\textnormal{(VP)}_{\delta}$ is (VP) when $\delta = 0$). The well-posedness of the Vlasov--Poisson equations is by now well-established. The existence of global weak solutions of finite energy is a classical result of Arsenev in \cite{Arsenev}. In the whole space (that is when $x \in \R^d$), the global existence and uniqueness  of classical solutions in dimension $1$ and $2$ is due to Ukai-Okabe \cite{Ukai-Okabe}; in dimension $3$ it is due to  Pfaffelmoser \cite{Pfaffelmoser} (see also \cite{Schaeffer}) and Lions-Perthame \cite{Lions-Perthame}; the latter uses an approach based on the propagation of moments in velocity.  These results also have analogues in the periodic setting (that is $x \in \T^d$), see \cite{Batt-Rein,Pallard2012}.
In  \cite{LOEPER200668}, Loeper proved that weak solutions (that may be measure valued in velocity) with bounded macroscopic density $\rho_f(t,x)$ are unique within this class (see \cite{Miot,Holding-Miot,CISS} for improvements of this criterion).
A particular well-known class of measure valued solutions corresponds to monokinetic distributions under the form $f(t,x,v)=\rho(t,x) \otimes \delta_{v= u(t,x)}$, which relate the Vlasov--Poisson equation to the pressureless Euler--Poisson system
\begin{equation} \label{EP}
\left|
\begin{array}{cc}
&  \partial_{t} \rho + \operatorname{div}_{x}(\rho u) = 0,\\
&  \partial_{t}  u + u \cdot \nabla_x u = E,\\
& E = \nabla \Delta^{-1} (\rho -1).
\end{array}
\right.
\end{equation}
In dimension $1$, the situation is actually more favorable (as the Green function associated with the Laplacian is not as singular as in higher dimension) and the Vlasov--Poisson equation actually enjoys a weak-strong uniqueness principle \cite{Hauray}, which we shall explain in more details later on.  Our analysis is based on a class of solutions with low regularity, in the sense that the proof only requires the following regularity
\begin{align}
    &f \in \mathscr{C}\big( [0,T]; L^{1}(\T \times \R)\big), \quad f \geq 0, \label{L1_continuity_in_time_L1}
\end{align}
with, for almost every $t \in [0,T],$
\begin{align}
     f(t,\cdot,\cdot) \in L^{1}(\R;L^{\infty}(\T)).
\end{align}
This integrability property follows from suitable assumptions on the initial datum. More precisely, we require the initial data
$$f_0 : \T \times \R \longrightarrow \R^{+}$$ to be measurable and to satisfy
\begin{align}
&f_0(x,v)\leq g(|v|) \quad \textnormal{ for a.e. } (x,v)\in\T\times\R,
\end{align}
for some non-increasing function $g:\R^+\to\R^+$ belonging to $L^1(\R^+; (1+|v|) \dd v)$. \begin{definition}[Admissible initial data] \label{admissible_initial_data} We denote $\mathcal{A}$ the set of initial data satisfying these assumptions, together with the normalization condition \eqref{proba}.
\end{definition}
For any $f_0\in\mathcal{A}$, Theorem 7.4 in \cite{bostan-vp-1d} ensures the existence and uniqueness of the mild solution to the Vlasov--Poisson system with the regularity
\begin{align}
(f,E)\in L^{1}([0,T]\times\T\times\R)
\times L^{\infty}([0,T];W^{1,\infty}(\T)).
\end{align}
An analysis of the mild solution also yields \eqref{L1_continuity_in_time_L1} (see the appendix for a proof of this property). Consequently, the characteristics given by
\begin{equation}
\label{eq:charac}
\left|
\begin{array}{cll}
   & X(t,s,x,v) = x + \displaystyle \int_{s}^{t} V(\tau,s,x,v) \dd \tau, \quad t\in [0,T] \\
    & V(t,s,x,v) = v + \displaystyle \int_{s}^{t} E(\tau,X(\tau,s,x,v)) \dd \tau, \quad t \in [0,T]\\
\end{array}
\right.
\end{equation}
where $(s,x,v) \in [0,T] \times \T \times \R$ is the initial condition, are uniquely defined by the classical Cauchy-Lipschitz theorem. The associated flow defined for every $s,t \in [0,T]$
\begin{align} \label{eq:nonlinear_flow_VP}
\varphi_{t,s} : (x,v) \in \T\times \R \longmapsto (X(t,s,x,v),V(t,s,x,v))
\end{align}
is a measure preserving homeomorphism. In its Lagrangian form, the solution $f(t)$ at time $t \in [0,T]$ is the transport of $f_{0}$ by the flow. If $f_{0}$ is viewed as a measure on $\T \times \R$ the transport by the characteristic flow  corresponds to the push-forward of $f_{0}$
\begin{align} \label{formula_f_mesure}
 f(t) = \varphi_{t,0}  \# f_{0}, \quad f(t)(A) = f_{0}(\varphi_{0,t}(A)) \textnormal{ for all Borel set } A \subset \T \times \R.
\end{align}
It is equivalent to test the push-forward measure $f(t)$ against any bounded and continuous function $\xi \in \mathscr{C}_{b}(\T \times \R)$
\begin{align}
\int_{\T \times \R} \xi(x,v)  f(t, \dd x, \dd v) = \int_{\T \times \R} \xi(\varphi_{t,0}(y,w))   f_0( \dd y, \dd w).
\end{align}
Finally, let us recall that the Vlasov--Poisson system \eqref{Vlasov--Poisson_eq} has many conserved quantities. When $f$ is at least measurable, any Casimir of $f$ is conserved in the sense that
\begin{equation} \label{mass_preserved}
  \forall t \in [0,T], \quad   \int_{\T \times \R} \mathrm{C}\big(f(t,x,v) \big)\dd x \dd v = \int_{\T \times \R} \mathrm{C}(f_{0}(y,w)\big) \dd y \dd w,
\end{equation}
for any $\mathscr{C}^{1}$ function $\mathrm{C} : \R \longrightarrow \R^{+}$ such that $\mathrm{C}(0) = 0.$ The case $\mathrm{C} = \textnormal{Id}$ yields the conservation of the total charge and $\mathrm{C}(s) = |s|^{p}$ with $p > 1$ yields the conservation of the $L^{p}$ norm of $f$. Two others important physical quantities, provided they are well-defined, are conserved. The total current verifies
\begin{align} \label{conservation_mass}
\forall t \in [0,T] , \quad    \int_{\T \times \R} f(t,x,v) v \dd v \dd x = \int_{\T \times \R} f_{0}(x,v) v \dd v \dd x.
\end{align}
The total energy of the system \eqref{Vlasov--Poisson_eq} defined for every $t \in [0,T]$ by
\begin{align} \label{total_energy_vp}
    \mathcal{E}(t) := \int_{\T \times \R} \frac{v^2}{2}f(t,x,v) \dd v \dd x + \int_{\T} \frac{1}{2} \big | E(t,x) \big |^2 \dd x
\end{align}
is also conserved.
 
\section{Multi-fluid solutions}\label{sec:multi-fluid_solutions}
\subsection{Existence}
A well-known class of weak solutions to the Vlasov--Poisson system is given by the so-called \textit{multi-fluid} Ansatz
\begin{align} \label{eq:multifluid_F}
    f^{N}(t,x,v) = \sum_{\ell \in I_{N} } \rho^{\ell}(t,x) \o \delta_{v = u^{\ell}(t,x)}, \quad N \in \N,
\end{align}
where $I_N$ is a finite or infinite index set and $(\rho^\ell,u^\ell)$ denote the density and velocity of the $\ell$-th fluid with $\rho^{\ell} \geq 0$, where the fluids $(\rho^{\ell},u^{\ell})_{\ell \in I_{N}}$ satisfy a pressureless Euler--Poisson system (see \eqref{M_alpha} below).

The following result, which is a variant in dimension $d =1$ of Theorem 1.3.11 in \cite{BEH}, shows that the Ansatz \eqref{eq:multifluid_F} yields a unique weak solution of the Vlasov--Poisson system whenever the fluid variables solve a coupled pressureless Euler--Poisson system and the velocities remain Lipschitz in space. In other words, it establishes the local-in-time equivalence between the Vlasov--Poisson and pressureless Euler--Poisson systems for this class of initial data. More precisely, we have.

\begin{theoreme}[Multi-fluid solution] \label{main_theorem}  Fix $N \in \N$ and let $f_0^{N}$ be a non negative initial distribution function in the form \eqref{eq:multifluid_F} with initial data
$(\rho^{\ell}_{0},u^{\ell}_{0})_{\ell \in I_{N}} \subset W^{m,\infty}(\T) \times W^{m+1,\infty}(\T)$ for some $m \geq 0$ and such that 
 $$ \displaystyle \sum_{\ell \in I_{N}} \| \rho^{\ell}_{0}\|_{W^{m,\infty}(\T)}<+\infty, \qquad \sup_{\ell \in I_{N}}  \:  \| u^{\ell}_{0}- \lambda^\ell\|_{W^{m+1,\infty}(\T)}<+\infty, $$
for some sequence $\lambda: I_{N} \to \R$. 
Then, for every $0 \leq \delta < L$, there exists $T_{N,\delta} > 0$ and a unique weak solution $f^{N}_{\delta}$ of the Vlasov--Poisson system $\textnormal{(VP)}_{\delta}$ on $[0,T_{N,\delta}]$. Moreover, $f^{N}_{\delta}$ takes the form \eqref{eq:multifluid_F} where 
$(\rho_{\delta},u_{\delta})=(\rho^{\ell}_{\delta},u^{\ell}_{\delta})_{\ell \in I_{N}}\subset \mathscr{C}\Big( [0,T_{N,\delta}];W^{m,\infty}(\T)) \times W^{m+1,\infty}(\T)) \Big) \cap \mathscr{C}^{1}\Big( [0,T_{N,\delta}];W^{m-1,\infty}(\T)) \times W^{m,\infty}(\T)) \Big)$ is the unique solution of the pressureless Euler--Poisson system
\begin{equation} \label{M_alpha}
(M_{\ell,\delta}):
\left|
\begin{array}{cc}
&  \partial_{t} \rho^{\ell}_{\delta} + \partial_{x}(\rho^{\ell}_{\delta} u^{\ell}_{\delta}) = 0,\\
&  \partial_{t} (\rho^{\ell}_{\delta} u^{\ell}_{\delta}) + \partial_{x} (\rho^{\ell}_{\delta} (u^{\ell}_{\delta})^2) = \rho^{\ell}_{\delta} E_{\delta},\\
\end{array}
\right.
\end{equation}
\begin{equation} 
     E_{\delta}(t,\cdot) = K_{\delta} \star \Big( \sum_{\ell \in I_{N}} \rho^{\ell}_{\delta}(t,\cdot) -1 \Big),\label{Poisson}
\end{equation}
with initial condition 
\begin{equation}\label{initial_phases}
    \rho^{\ell}_{\delta}(0,x) = \rho^{\ell}_{0}(x), \quad u^{\ell}_{\delta}(0,x) = u^{\ell}_{0}(x), \quad x\in \T,
\end{equation}
and satisfies
\begin{equation}
 \sup_{t \in [0,T_{N,\delta}]} \displaystyle \sum_{\ell \in I_{N}} \| \rho^{\ell}_{\delta}(t)\|_{W^{m,\infty}(\T)}<+\infty, \qquad  \sup_{t \in [0,T_{N,\delta}]} \sup_{\ell \in I_{N}}   \| u^{\ell}_{\delta}(t)-\lambda^\ell\|_{W^{m+1,\infty}(\T)}<+\infty. \label{uniform_estimate_ell}
\end{equation}
Moreover, $T_{N,\delta}$ is such that 
\begin{equation}\label{def_existence_time}
 T_{N,\delta}  \geq  \sup \Big\lbrace \tau > 0 \: : \:  \underset{\ell \in I_{N}} \sup \int_{0}^{\tau} \| \partial_{x} u^{\ell}_{\delta}(t) \|_{L^{\infty}(\T)} \dd t < +\infty \Big \rbrace. 
\end{equation}
\end{theoreme}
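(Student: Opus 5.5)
The plan is to construct the fluids $(\rho^\ell_\delta,u^\ell_\delta)$ first, by a fixed point in Lagrangian variables. Then I check that the resulting multi-fluid measure is a weak solution of $\textnormal{(VP)}_\delta$, and obtain uniqueness from the weak--strong principle. Write $u^\ell = \lambda^\ell + w^\ell$, so that $\sup_\ell\|w^\ell\|_{W^{m+1,\infty}}$ is the quantity to control.

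\textbf{Step 1: local existence by a Lagrangian contraction.} Given a field $E\in \mathscr{C}([0,\tau];W^{m,\infty}(\T))$, each velocity equation $\partial_t u^\ell + u^\ell\partial_x u^\ell = E$ is an inviscid Burgers equation with source. I solve it along the characteristics $\dot X^\ell = U^\ell$, $\dot U^\ell = E(t,X^\ell)$. As long as $\partial_x X^\ell(t,\cdot)>0$, the map $X^\ell(t,\cdot)$ is a diffeomorphism and I set $u^\ell(t,X^\ell(t,y))=U^\ell(t,y)$. The density is the push-forward, $\rho^\ell(t,X^\ell)\,\partial_yX^\ell=\rho^\ell_0$. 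Differentiating along characteristics gives the Riccati-type bound
$$\frac{d}{dt}\partial_x u^\ell = -(\partial_x u^\ell)^2+\partial_xE .$$
The identity $\partial_x E_\delta=\sum_\ell\rho^\ell-1$ holds for $\delta=0$, and $\|\partial_x E_\delta\|_\infty\le \delta^{-1}\|\sum\rho^\ell-1\|_{L^1}$ holds for $\delta>0$. Together with mass conservation, these give bounds on $\|E\|_{W^{m,\infty}}$ in terms of $\sum_\ell\|\rho^\ell\|_{W^{m,\infty}}$.

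Because $\lambda^\ell$ is a constant shift, it disappears from all derivative estimates. The $W^{m+1,\infty}$ bounds on $w^\ell$ and the $W^{m,\infty}$ bounds on $\rho^\ell$ are therefore uniform in $\ell$, with a constant depending only on $\sup_\ell\|u^\ell_0-\lambda^\ell\|_{W^{m+1,\infty}}$. This yields a map $E\mapsto E[\rho(E)]$ on a ball of $\mathscr{C}([0,\tau];W^{m,\infty})$, and since $\sum_\ell\|\rho^\ell\|$ is finite, summing over the (possibly infinite) index set is harmless. I would show it is a contraction in the weaker norm $\mathscr{C}([0,\tau];L^\infty)$ for $\tau$ small, using Lipschitz dependence of the characteristics on $E$; the ball in the strong norm is closed for the weak norm by weak-$*$ compactness.

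\textbf{Step 2: continuation criterion \eqref{def_existence_time}.} All higher norms obey linear Gronwall inequalities. Their coefficients involve $\|\partial_x u^\ell\|_{L^\infty}$ and norms of $E$, and the latter are controlled by the total mass via the Poisson equation. Hence the bounds in \eqref{uniform_estimate_ell} persist as long as $\sup_\ell\int_0^\tau\|\partial_xu^\ell\|_\infty$ stays finite, and in particular the Jacobian $\partial_yX^\ell=\exp\int\partial_xu^\ell$ stays positive. The solution can then be restarted, which gives \eqref{def_existence_time}. I expect this step to be the main obstacle. The difficulty is to make the bounds uniform over infinitely many fluids, and for $m\ge1$ to close the estimate on $\partial_x^{m+1}u^\ell$. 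Here one must note that $\partial_x^{m+1}E$ involves only $\partial_x^m\rho$ (a gain of one derivative from $K_\delta$), so the system closes in $W^{m,\infty}\times W^{m+1,\infty}$. The time regularity $\mathscr{C}^1$ with one derivative lost then follows directly from the equations.

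\textbf{Step 3: weak solution and uniqueness.} Testing $f^N_\delta=\sum_\ell\rho^\ell\otimes\delta_{u^\ell}$ against $\xi(x,v)$ and using \eqref{M_alpha} (in its nonconservative form $\partial_tu+u\partial_xu=E$, valid where $\rho^\ell>0$, and harmless elsewhere since it is multiplied by $\rho^\ell$) gives the weak form of $\textnormal{(VP)}_\delta$. The total density $\sum_\ell\rho^\ell$ is bounded, so uniqueness within weak solutions with bounded density follows from Loeper's result for $\delta=0$ and from Lipschitz stability for $\delta>0$. It also follows from the one-dimensional weak--strong principle of Hauray recalled later.
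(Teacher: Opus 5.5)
Your proposal is correct in outline, but it takes a different and more self-contained route than the paper. The paper does not prove well-posedness of the Euler--Poisson system \eqref{M_alpha}--\eqref{Poisson} at all. Existence, uniqueness, the bounds \eqref{uniform_estimate_ell} and the continuation criterion \eqref{def_existence_time} are imported from \cite[Corollary 3.2.5]{BEH}, with only a one-line remark that in 1D the $L^2$-Sobolev framework can be replaced by $W^{m,\infty}$. What the paper actually writes out is your Step 3, in Lagrangian rather than Eulerian form. The fluid trajectories satisfy $\ddot\psi^\ell_{t,\delta}=E_\delta(t,\psi^\ell_{t,\delta})$, so they coincide with the characteristics issued from $(x,u^\ell_0(x))$. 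This gives $f^N_\delta(t)=\hat\varphi^\delta_{t,0}\# f^N_0$, and Loeper's uniqueness then identifies $\hat\varphi^\delta$ with the VP flow.

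Your Steps 1--2 supply what the paper delegates to the citation, and in effect justify its $L^\infty$ remark. They give the Lagrangian fixed point, the observation that the constant shifts $\lambda^\ell$ drop out of every estimate (so bounds are uniform in $\ell$), and the gain of one derivative that closes the system in $W^{m,\infty}\times W^{m+1,\infty}$. The paper's Lagrangian verification, in turn, produces the push-forward representation in which the multi-fluid solution is used later, in \eqref{def_remapping_procedure}--\eqref{def_f_approx}. Since your Step 1 is already built on the characteristics $(X^\ell,U^\ell)$, you can read this representation off directly instead of testing the nonconservative momentum equation.

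Some steps of your sketch need tightening.

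\emph{The fixed-point ball.} It must bound $\sup_t\|E(t)\|_{W^{m+1,\infty}}$, not just the $W^{m,\infty}$ norm. Otherwise, for $m=0$, neither the characteristics nor the Riccati equation are defined.

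\emph{The contraction when $\delta=0$.} The kernel $K$ jumps at the origin, so Lipschitz dependence of $X^\ell$ on $E$ does not by itself give the contraction. You also need an estimate $\|K\star(\rho_1-\rho_2)(t)\|_{L^\infty}\le C\sup_\ell\|X^\ell_1(t)-X^\ell_2(t)\|_{L^\infty}$, with $C$ depending on $\sum_\ell\|\rho^\ell_0\|_{L^\infty}$ and $\sum_\ell\|\rho^\ell_1(t)\|_{L^\infty}$. This is where the bounded densities enter.

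\emph{Control of $E$ in Step 2.} Only $\|E_\delta\|_{L^\infty}$ (and $\|\partial_xE_\delta\|_{L^\infty}$ when $\delta>0$) is controlled by the mass. For $\delta=0$ you need $\sum_\ell\|\rho^\ell(t)\|_{L^\infty}\le e^{\Lambda(t)}\sum_\ell\|\rho^\ell_0\|_{L^\infty}$, where $\Lambda(t)=\sup_\ell\int_0^t\|\partial_xu^\ell\|_{L^\infty}$. The Jacobian $J=\partial_yX^\ell$ satisfies $e^{-\Lambda}\le J\le e^{\Lambda}$ and $\ddot J=\partial_xE_\delta(t,X^\ell)J$. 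It is what turns the criterion stated with $\sup_\ell\int_0^t$ into a pointwise bound on $\sup_\ell\|\partial_xu^\ell(t)\|_{L^\infty}$.

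\emph{Time regularity.} Strong continuity in time with values in $W^{m,\infty}$, and the $\mathscr{C}^1$ claim, do not ``follow directly from the equations''. Transporting a discontinuous $\rho^\ell_0$ is not continuous in $L^\infty$, so one only gets weak-$*$ continuity (or continuity in $L^p$, $p<\infty$). This defect lies in the statement itself, inherited from the $H^s$ setting of \cite{BEH}, and not in your approach.
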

The role of the sequence $\lambda$ in this theorem is to allow velocities $(u_0^\ell)_{\ell \in I_N}$ that are decaying perturbations of unbounded constant reference velocities.
The proof relies on the correspondence between the Lagrangian formulations of the Vlasov--Poisson and pressureless Euler--Poisson systems. Since this Lagrangian viewpoint also underlies the numerical method developed in a forthcoming paper, we briefly recall the main arguments.
We first show that the multi-fluid representation \eqref{eq:multifluid_F} solves the Vlasov equation associated with the electric field generated by the Euler--Poisson system. We then verify that the induced charge density satisfies the Poisson equation, thereby recovering the Vlasov--Poisson system. Uniqueness of weak solutions \cite{LOEPER200668} finally implies that the solution remains of the \textit{multi-fluid} form \eqref{eq:multifluid_F}.
\begin{proof}
 This result, in particular the existence and uniqueness statement for the Euler--Poisson system \eqref{M_alpha}--\eqref{Poisson}, follows from \cite[Corollary 3.2.5]{BEH}, with a minor difference: as we work in 1D, one can replace the $L^2$ Sobolev framework of \cite{BEH} by the $L^\infty$ Sobolev spaces considered here. For pedagogical purposes, let us explain how a solution to the Euler--Poisson system indeed gives rise to a solution to Vlasov--Poisson.
Fix $0 \leq \delta < L$ and denote $(\rho^{\ell}_{\delta},u^{\ell}_{\delta})_{\ell \in I_{N}}$ be the solution the Euler--Poisson system \eqref{M_alpha}--\eqref{Poisson}. For each $\ell \in I_{N}$, let  define the Lagrangian flow by
\begin{equation}
\label{dpsidt}
   \frac{d}{dt} \psi_{t,\delta}^{\ell}(x) = u^{\ell}_{\delta}(t,\psi_{t,\delta}^{\ell}(x)) , \quad \psi_{0,\delta}^{\ell}(x) = x.
\end{equation}
Since $u^{\ell}_{\delta}$ is continuous in its first variable and Lipschitz continuous in its second variable, the map $\psi_{t,\delta}^{\ell}$ is well defined on $[0,T_{N,\delta}] \times \T$ and the map $x \longmapsto \psi_{t,\delta}(x)$ is a Lipschitz homeomorphism. Differentiating \eqref{dpsidt} once more in time and using the momentum equation \eqref{M_alpha} gives
$$
\frac{d^2}{dt^2} \psi_{t,\delta}^{\ell}(x) = E_{\delta}(t,\psi_{t,\delta}^{\ell}(x)), \quad \frac{d}{dt} \psi_{0,\delta}^{\ell}(x) = u_{\delta}^{\ell}(0,x).
$$ 
Hence each fluid trajectory satisfies the characteristic equations of the Vlasov equation \eqref{eq:charac} with initial data $(x,u^\ell_{\delta}(0,x))$. By uniqueness of solutions to the characteristic system we infer,
\begin{equation}
    \psi_{t,\delta}^{\ell}(x) = \hat{X}_{\delta}(t,0,x,u^{\ell}_{\delta}(0,x)), \quad \frac{d}{dt} \psi_{t,\delta}^{\ell}(x) = \hat{V}_{\delta}(t,0,x,u^{\ell}_{\delta}(0,x)),
\end{equation}
where 
\begin{equation}
\label{eq:charac}
\left|
\begin{array}{cll}
   & \dot{\hat{X}}_{\delta}(t,0,x,v) = \hat{V}_{\delta}(t,0,x,v), \:  t\in [0,T_{N,\delta}] \\
    & \dot{\hat{V}}_{\delta}(t,0,x,v) = E_{\delta}(t,\hat{X}_{\delta}(t,0,x,v)), \: t \in [0,T_{N,\delta}]\\
   & \hat{X}_{\delta}(0,0,x,v) = x, \quad \hat{V}_{\delta}(0,0,x,v) = v.
\end{array}
\right.
\end{equation}
Denote by
\begin{equation*}
    \hat{\varphi}_{t,0}^{\delta} : (x,v) \longmapsto (\hat{X}_{\delta}(t,0,x,v), \hat{V}_{\delta}(t,0,x,v))
\end{equation*}
the corresponding characteristic flow.
To prove that $f^{N}_{\delta}(t)$ given in \eqref{eq:multifluid_F} solves the Vlasov--Poisson system, recall that the continuity equation implies that 
\begin{equation} \label{def_rho_t}
    \rho^{\ell}_{\delta}(t) = \psi_{t,\delta}^{\ell} \# \rho^{\ell}_{0}
\end{equation}
and that the initial data is independent of $\delta.$
Therefore, for every $\xi \in \mathscr{C}_{b}(\T \times \R),$
\begin{align*}
    \int_{\T \times \R} \xi(x,v)  f^{N}_{\delta}(t,\dd x,\dd v) &= \sum_{\ell \in I_{N}} \int_{\T} \xi(x,u^{\ell}_{\delta}(t,x)) \rho^{\ell}_{\delta}(t,x) \dd x\\
    &=  \sum_{\ell \in I_{N}} \int_{\T} \xi(x,u^{\ell}_{\delta}(t,x)) \rho^{\ell}_{\delta}(t)(\dd x) \\
    & = \sum_{\ell \in I_{N}} \int_{\T} \xi(\psi_{t,\delta}^{\ell}(x),u^{\ell}_{\delta}(t,\psi_{t,\delta}^{\ell}(x)) \rho_{0}^{\ell}(\dd x)\\
    & = \sum_{\ell \in I_{N}} \int_{\T} \xi\big(\hat{X}_{\delta}(t,0,x,u^{\ell}_{\delta}(0,x)),
    \hat{V}_{\delta}(t,0,x,
    u^{\ell}_{\delta}(0,x)\big)
    \rho_{0}^{\ell}(\dd x) \\
    &=\sum_{\ell \in I_{N}} \int_{\T} \xi\circ  \hat{\varphi}^{\delta}_{t,0}(x, u^\ell_0(x))  \rho_{0}^{\ell}(\dd x)\\
    & =\int_{\T \times \R} \xi(x,v) \hat{\varphi}^{\delta}_{t,0} \# f^{N}_{0}(\dd x,\dd v).
\end{align*}
Hence, 
\begin{align*}
 f^{N}_{\delta}(t) = \hat{\varphi}^{\delta}_{t,0} \# f^{N}_{\delta}(0)   
\end{align*}
so $f^{N}_{\delta}(t)$ satisfies the Vlasov equation
$$
\partial_{t} f^{N}_{\delta} + v \partial_{x} f^{N}_{\delta} + E_{\delta} \partial_{v} f^{N}_{\delta} = 0.
$$
Finally,
$$
\int_{\R} f^{N}_{\delta}(t,x,\dd v ) =  \sum_{\ell \in I_{N}} \rho^{\ell}_{\delta}(t,x),
$$
and the Poisson equation \eqref{Poisson} becomes
$$
 E_{\delta}(t,\cdot)= K_{\delta} \star \Big( \displaystyle \int_{\R} f^{N}_{\delta}(t,\cdot,\dd v)  -1\Big).
$$
Therefore, $(f^N_{\delta},E_{\delta})$ solves the Vlasov--Poisson system. By uniqueness of weak solutions, the characteristic flow $\hat{\varphi}^{\delta}_{t,0}$ coincides with the nonlinear Vlasov--Poisson flow $\varphi^{\delta}_{t,0}$ defined in \eqref{eq:nonlinear_flow_VP}.
\end{proof}
An immediate consequence of the fact that the \textit{multi-fluid} solution $f^{N}_{\delta}$ solves a Vlasov--Poisson type system is the conservation of mass, momentum and energy, provided they are well-defined. Since the mass and momentum are independent of the regularization parameter $\delta$ at the initial time, they remain independent of $\delta$ for all subsequent times.

\begin{proposition}[Conservations] Let $0 \leq \delta < L$. Fix $N \in \N$ and let $f^{N}_{\delta}$ be a \textit{multi-fluid} solution on $[0,T_{N,\delta}]$ with $f^{N}_{\delta}(0)$ independent of $\delta.$ Then, it holds for $t \in [0,T_{N,\delta}],$
\begin{align}
    &\int_{\T} \rho^{\ell}_{\delta}(t,x) \dd x = \int_{\T} \rho^{\ell}(0,x) \dd x, \quad \ell \in I_{N} \label{conv_mass},\\
    &\int_{\T} \sum_{\ell \in I_{N}} \rho^{\ell}_{\delta}(t,x)u^{\ell}_{\delta}(t,x) \dd x= \int_{\T} \sum_{\ell \in I_{N}} \rho^{\ell}(0,x)u^{\ell}(0,x) \dd x, \label{conv_momentum}\\
    &\int_{\T} \sum_{\ell \in I_{N}}  \frac{\rho^{\ell}_{\delta}(t,x) u^{\ell}_{\delta}(t,x)^2}{2} + \frac{E_{\delta}(t,x)^2}{2} \dd x = \int_{\T} \sum_{\ell \in I_{N}}  \frac{\rho^{\ell}(0,x) u^{\ell}(0,x)^2}{2} + \frac{E_{\delta}^2(0,x)}{2} \dd x. \label{conv_energy}
\end{align}
\end{proposition}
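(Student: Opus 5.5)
The plan is to derive all three identities directly from the fluid system $(M_{\ell,\delta})$--\eqref{Poisson}, using the regularity \eqref{uniform_estimate_ell} from Theorem~\ref{main_theorem}. This regularity makes every $t$-derivative below a $\mathscr{C}^0$-in-time quantity, and the sums over $\ell$ converge absolutely, since $\sum_\ell\|\rho^\ell_\delta\|_{L^\infty}<\infty$. For the momentum and energy sums this also needs $\sup_\ell|\lambda^\ell|<\infty$ or moment bounds; I take these as the "provided they are well-defined" hypothesis. The $\delta=0$ and $\delta>0$ cases run in parallel until the very last step of the energy computation, which is where I expect the difficulty.

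\textbf{Mass.} I integrate the continuity equation over $\T$. The flux term $\partial_x(\rho^\ell_\delta u^\ell_\delta)$ has zero integral by periodicity, which gives \eqref{conv_mass} for each $\ell$. An alternative route is the push-forward identity $\rho^\ell_\delta(t)=\psi^\ell_{t,\delta}\#\rho^\ell_0$ from the proof of Theorem~\ref{main_theorem}.

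\textbf{Momentum.} I sum the momentum equations over $\ell$ and integrate over $\T$; the flux terms again vanish. Writing $\rho_\delta=\sum_\ell\rho^\ell_\delta$, the remaining term is
\begin{equation*}
\int_\T \rho_\delta E_\delta = \int_\T(\rho_\delta-1)\,K_\delta\star(\rho_\delta-1)+\int_\T E_\delta .
\end{equation*}
Both pieces vanish because $K_\delta$ is odd. The first is a quadratic form with an antisymmetric kernel, hence zero. The second equals $\int K_\delta\cdot\int(\rho_\delta-1)=0$.

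\textbf{Energy.} Set $j=\sum_\ell\rho^\ell_\delta u^\ell_\delta$. Using the two equations of $(M_{\ell,\delta})$, the kinetic part satisfies $\frac{d}{dt}\int\sum_\ell\rho^\ell_\delta (u^\ell_\delta)^2/2=\int jE_\delta$. For the field part I first compute the time derivative of the field:
\begin{equation*}
\partial_t E_\delta=-K_\delta\star\partial_x j=-K_\delta'\star j,\qquad K_\delta'=-\tfrac1L+\eta_\delta,\quad \eta_\delta=\tfrac1\delta\mathbb 1_{(-\delta/2,\delta/2)} .
\end{equation*}
Here I use that $K_\delta$ is continuous and $W^{1,\infty}$, so there is no jump contribution. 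Since $\int E_\delta=0$ and $\eta_\delta$ is even, this gives
\begin{equation*}
\frac{d}{dt}\int_\T\frac{E_\delta^2}{2}=-\int_\T j\,(\eta_\delta\star E_\delta).
\end{equation*}
The total energy derivative is therefore
\begin{equation*}
\int_\T j\,\big(E_\delta-\eta_\delta\star E_\delta\big).
\end{equation*}

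For $\delta=0$ the kernel $\eta_\delta$ is replaced by the Dirac mass, because $K'=-1/L+\delta_0$. This derivative is then identically zero and \eqref{conv_energy} follows.

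\textbf{Main obstacle.} For $\delta>0$, this derivative has no structural reason to vanish, since $j$ is an arbitrary current. The main obstacle is therefore to establish \eqref{conv_energy} as literally stated for $\delta>0$. My first attempt would be to look for an additional cancellation, for instance via the Lagrangian representation $f^N_\delta(t)=\varphi^\delta_{t,0}\#f^N_0$. If that fails, the honest conclusion of the computation is weaker for $\delta>0$: the energy in \eqref{conv_energy} varies at rate at most $\|j\|_{L^1}\,\|E_\delta-\eta_\delta\star E_\delta\|_{L^\infty}\lesssim \delta\,\|j\|_{L^1}\,\|\partial_xE_\delta\|_{L^\infty}$. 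Exact conservation of the stated expression would then hold only in the limit $\delta\to0$.
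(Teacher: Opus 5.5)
Your proofs of \eqref{conv_mass} and \eqref{conv_momentum}, and of \eqref{conv_energy} for $\delta=0$, are correct. The paper gives no computation here. It only remarks that $f^N_\delta$ solves a Vlasov--Poisson type system and so inherits its conservation laws. Your fluid-level computation is that remark made explicit: the fluxes vanish by periodicity, and $\int_\T\rho_\delta E_\delta\,\dd x=0$ because $K_\delta$ is odd and $\int_\T K_\delta=0$.

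For $\delta>0$, the obstacle you hit is not a gap in your proof but an error in the statement, so you should drop the hedge. No extra cancellation exists, Lagrangian or otherwise, and your defect formula settles this on a two-fluid solution. Take $k=2\pi/L$ and $0<\epsilon<\frac12$, with constant velocities $u^1_0\equiv 1$, $u^2_0\equiv 0$ and densities
$\rho^1_0=\frac12(1+\epsilon\cos kx+\epsilon\sin kx)$, $\rho^2_0=\frac12(1-\epsilon\sin kx)$. Write $\sigma_\delta:=\sum_\ell\rho^\ell_\delta-1$. Then $\sigma_\delta(0)=\frac{\epsilon}{2}\cos kx$, $j(0)=\rho^1_0$, and $E_\delta(0)=\frac{\epsilon s}{2k}\sin kx$, where $s=\hat\eta_\delta(k)=\frac{\sin(\pi\delta/L)}{\pi\delta/L}\in(0,1)$. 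Hence $E_\delta(0)-\eta_\delta\star E_\delta(0)=(1-s)E_\delta(0)$, and
\begin{equation*}
\frac{\dd}{\dd t}\Big|_{t=0}\int_\T\Big(\sum_{\ell}\frac{\rho^\ell_\delta(u^\ell_\delta)^2}{2}+\frac{E_\delta^2}{2}\Big)\dd x=\frac{\epsilon^2 s(1-s)L}{8k}>0 .
\end{equation*}
So \eqref{conv_energy} fails for small $t>0$.

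The paper's justification is only valid if "energy" means the energy conserved by $\textnormal{(VP)}_\delta$, and that energy is not $\frac12\int_\T E_\delta^2$. One checks directly that $K_\delta=\eta_\delta\star K$, so $E_\delta=\eta_\delta\star E$, where $E:=K\star\sigma_\delta$ is the unregularized field. Since $\partial_t E=\frac1L\int_\T j-j$ and $\partial_t E_\delta=\frac1L\int_\T j-\eta_\delta\star j$, the same computation as yours gives
\begin{equation*}
\frac{\dd}{\dd t}\,\frac12\int_\T E_\delta\,E\,\dd x=-\frac12\int_\T jE_\delta\,\dd x-\frac12\int_\T E\,(\eta_\delta\star j)\,\dd x=-\int_\T jE_\delta\,\dd x .
\end{equation*}
Therefore $\int_\T\sum_\ell\frac12\rho^\ell_\delta(u^\ell_\delta)^2\,\dd x+\frac12\int_\T E_\delta\,(K\star\sigma_\delta)\,\dd x$ is exactly conserved. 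In Fourier, its potential part has multiplier $\hat\eta_\delta(k)/k^2$, whereas the stated one has $\hat\eta_\delta(k)^2/k^2$; these agree only for $\delta=0$.

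The correct conclusion is therefore threefold. First, \eqref{conv_energy} holds as stated for $\delta=0$. Second, for $\delta>0$ it holds once $\frac12E_\delta^2$ is replaced by $\frac12E_\delta\,(K\star\sigma_\delta)$. Third, the quantity as written drifts at the $O(\delta)$ rate you estimated. None of this affects the main theorem, whose proof uses only conservation of mass.
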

\subsection{Growth rate of the Lipschitz norm of the fluid velocities} \label{sec:evol_lipschitz_norm}
A central feature of the \textit{multi-fluid} method we propose is that, at the initial time, all fluid velocities are chosen to be constant, and therefore have initially vanishing Lipschitz norms. Consequently, these norms cannot grow instantaneously, provided that the Lipschitz norm of the electric field remains bounded. For our analysis, however, we need a stronger property: we require the Lipschitz norms of the fluid velocities to grow at most quadratically, embodying the fact that the dominant part of the dynamics is the fluid one.  This, in turn, requires a uniform-in-time bound on the Lipschitz norm of the electric field. When $\delta=0$, such a bound cannot be obtained for \textit{multi-fluid} solutions since they are irregular in velocity. When $\delta>0$ however, this becomes possible, since the regularized Coulomb kernel appearing in the Poisson equation is Lipschitz continuous, while the total mass is readily preserved by the dynamics. The resulting existence time is of order $\sqrt{\delta}$. We establish the following result.
\begin{proposition}[Growth of the Lipschitz norm of the fluid velocities] \label{lower_bound_local_existence_time}
Let $0 < \delta < L$. Fix $N \in \N$ and let $f^{N}_{\delta}$ be a \textit{multi-fluid} solution on $[0,T_{N,\delta}]$ given by Theorem \ref{main_theorem} with an initial data $(\rho_{0}^{\ell},u_{0}^{\ell})$ such that
\begin{align}
      \partial_{x} u^{\ell}_{0}  = 0, \: \ell \in I_{N}.
\end{align}
Then, for every $0 < \eta < 1$ 
\begin{align}
    T^{N}_{\delta, \eta}:= \sup \Big \lbrace t \geq 0 \: :  \underset{ \ell \in I_{N}} \sup \int_{0}^{t} \| \partial_{x} u^{\ell}_{\delta}(\tau) \|_{L^{\infty}(\T)} \dd \tau \leq \eta  \Big \rbrace  \label{lower_bound_s1}
\end{align}
satisfies the lower bound
\begin{align}
    T^{N}_{\delta,\eta} \geq \sqrt{\frac{ \eta(1-\eta) \delta}{ M_{0} + L}},  \label{lower_bound_time_multi_fluid}
\end{align}
where $M_{0} = \displaystyle \sum_{\ell \in I_{N}} \int_{\T} \rho^{\ell}_{0}(x) \dd x.$

\begin{proof} Differentiating in space the momentum equation yields for every $\ell \in I_{N}$ and $0 \leq t \leq T^{N}_{\delta,\eta},$

\begin{align*}
\partial_{t} (\partial_{x} u^{\ell}_{\delta}) + u^{\ell}_{\delta} \partial^2_{x} u^{\ell}_{\delta} = \partial_{x} E_{\delta} - |\partial_{x} u^{\ell}_{\delta}|^2.
\end{align*}
Now, we use the Lagrangian form of the equation
\begin{align*}
    \partial_{x} u^{\ell}_{\delta}(t,x) &= \partial_{x} u^{\ell}_{0}( (\psi_{t,\delta}^{\ell})^{-1}(x)) + \int_{0}^{t} \partial_{x} E_{\delta}(\tau, \psi_{\tau,\delta}^{\ell} \circ (\psi_{t,\delta}^{\ell})^{-1}(x) )\dd \tau\\
    &-\int_{0}^{t} | \partial_{x} u^{\ell}_{\delta}  |^2\big( \tau,\psi_{\tau,\delta}^{\ell} \circ (\psi_{t,\delta}^{\ell})^{-1}(x) \big) \dd \tau.
\end{align*}
Since initially the fluid velocities are constant,  setting $y_{\delta}(t) = \underset{ \ell \in I_{N}} \sup \| \partial_{x} u^{\ell}_{\delta} (t) \|_{L^{\infty}(\T)}$  we get
\begin{align}
\label{estime1_y}
    y_{\delta}(t) \leq \int_{0}^{t} \| \partial_{x} E_{\delta}(\tau) \|_{L^{\infty}(\T)} \dd \tau+ \int_{0}^{t} y_{\delta}(\tau)^2 \dd \tau.
\end{align}
To obtain a closed estimate, we need to estimate the electric field. Using the estimate \eqref{bound_Lipschitz_K} we have the bound
\begin{align}
    \| \partial_{x} E_{\delta}(\tau) \|_{L^{\infty}(\T)} \leq \frac{M_0 + L}{\delta},
\end{align}
and inserting it in \eqref{estime1_y} yields
\begin{align}
    y_{\delta}(t) \leq \int_{0}^{t} y_{\delta}(\tau)^2 \dd \tau +  t\Big(\frac{M_{0} + L}{\delta}\Big). \label{gronwall_y}
\end{align}
For $r \geq 0$, we set $T_{r,\delta} := \sup \big \lbrace t \geq 0 \: : y_{\delta}(s) \leq r  \quad \forall s \in [0,t] \big \rbrace$. We thus get for $t \in [0,T_{r,\delta}]$
\begin{align}
    y_{\delta}(t) \leq  T_{r,\delta}\Big( r^2  + \frac{M_0 + L}{\delta}\Big). \label{estimate_2_y}
\end{align}
By continuity of $y_{\delta}$ (which is consequence of the uniform estimate in $\ell$ of Theorem \ref{main_theorem}) and the definition of $T_{r,\delta}$ we obtain
\begin{align} \label{ineq_TM}
    r \leq T_{r,\delta}\Big(r^2 + \frac{M_0 + L}{\delta}\Big) \Longrightarrow T_{r,\delta} \geq \frac{r}{r^2 + \frac{M_0+L}{\delta}}=: t^{\star}(r,\delta).
\end{align}
Taking
$$
T_{r_{\delta},\delta} =t^{\star}(r_\delta,\delta)
$$
with
$$
     r_{\delta}  := \sqrt{ \frac{\eta (M_0 + L)}{ (1-\eta) \delta }  }, 
$$
we obtain
$$
T_{r_{\delta},\delta} = \sqrt{\frac{ \eta(1-\eta) \delta}{ M_{0} + L}}
$$
and
$$
T_{r_{\delta},\delta} r_\delta =\eta.
$$
Eventually, since $\underset{ \ell \in I_{N} } \sup \int_{0}^{T_{{r}_{\delta},\delta}} \| \partial_{x} u^{\ell}_{\delta}(s) \|_{L^{\infty}(\T)} \dd s \leq \int_{0}^{T_{r_{\delta},\delta }} y_{\delta}(s) \dd s \leq \eta$, we deduce that $T^{N}_{\delta,\eta} \geq T_{{r_\delta},\delta}$, which yields \eqref{lower_bound_time_multi_fluid}.

\end{proof}
\end{proposition}

\section{Wasserstein-1 stability estimate}
\label{sec:wasserstein-stab}
In this section, we discuss a stability estimate in the Wasserstein-1 distance between weak solutions of the Vlasov--Poisson system. To this end, we recall (see \cite{Santambrogio}) that the Wasserstein-1 distance between two probability measures $\mu,\nu\in\mathcal P_1(X)$, where $X=\mathbb T\times\mathbb R$ is endowed with its natural distance $d$, is defined by
\begin{equation}
\mathcal W_1(\mu,\nu)
:=
\inf_{\gamma\in\Pi(\mu,\nu)}
\int_{X\times X} d (z,\hat{z}) \dd \gamma(z,\hat{z}),
\end{equation}
where $\Pi(\mu,\nu)$ denotes the set of transport plans between $\mu$ and $\nu$, namely the probability measures on $X\times X$ whose first and second marginals are $\mu$ and $\nu$, respectively. Here, $\mathcal P_1(X)$ denotes the space of probability measures on $X$ with finite first moment. A distinguished subclass of transport plans consists of those induced by a measurable transport map $T:X\to X$, namely
\begin{equation}
\mathcal{C}
:=
\Big \lbrace
\gamma\in\Pi(\mu,\nu):
\exists \: T:X\to X \: :
\gamma(\dd z,\dd \hat{z}) = \mu(\dd z) \otimes \delta_{T(z)}(\dd \hat{z})
\Big \rbrace.
\end{equation}
We thus obtain an easy bound of the Wasserstein-1 distance by the so-called Monge cost (see e.g. \cite{Santambrogio})
\begin{equation} \label{eq:bound_W1_by_Monge}
    \mathcal{W}_{1}(\mu,\nu) \leq\underset{  T : X \longrightarrow X,\\ \nu = T \# \mu}{ \inf } \int_{X} d(z,Tz)  \mu(\dd z).
\end{equation}
The following statement, which follows from a result of Hauray \cite{Hauray}, provides a weak-strong stability estimate in the Wasserstein-1 distance for the Vlasov--Poisson system, a property specific to dimension $1$ (see \cite{LOEPER200668} for extensions to higher dimensions). More precisely, it shows that, in order to compare two weak solutions of the Vlasov--Poisson system, it is sufficient for one of the solutions to have a  bounded charge density at each time, while the other only needs to have a finite first moment.
\begin{proposition} \label{Maxime_Hauray_theorem}
Fix $T > 0$ and $f_0 \in \mathcal{A}$. Then, for every $0 \leq \delta < L$, the unique weak solution $f_{\delta}$  on $[0,T]$ to $\textnormal{(VP)}_{\delta}$ with initial data $f_0$ satisfies the boundedness of the charge uniformly with respect to $\delta,$
\begin{align}
  & \underset{\delta \geq  0} \sup \| \rho_{f_{\delta}}(t) \|_{L^{\infty}(\T)} \leq \max \lbrace g_0(0), \| g_0\|_{L^1} \rbrace \Big(1+ t( \|f_0\|_{L^{1}} + L)\Big), \quad \textnormal{ for } t \in [0,T],\label{bound_rho_f}
\end{align}
and the pointwise estimate
\begin{align}
  & f_{\delta}(T,x,v) \leq g_{T}(|v|)\quad \textnormal{ for a.e } (x,v) \in \T \times \R, \label{decay_f_in_v}
\end{align}
where $g_{T}$ is a non increasing function in $L^{1}(\R^{+})$ which depends only on the initial data. In addition, for any other measure solution $\nu_{\delta}$ of $\textnormal{(VP)}_{\delta}$ on $[0,T]$ such that
\begin{align*}
    & \int_{\T \times \R} |v| \nu_{\delta}(0,\dd x, \dd v) < +\infty,
\end{align*}
the following stability estimate holds
\begin{equation}
    \label{eq:wasserstein-stab}
    \mathcal{W}_1(f_{\delta}(t), \nu_{\delta}(t)) \leq e^{a(t)} \mathcal{W}_1(f_{\delta}(0), \nu_{\delta}(0)),
\end{equation}
where
\begin{align}
\quad a(t):= 8 \int_{0}^{t} (1 +  \underset{\delta \geq 0} \sup \:  \| \rho_{f_{\delta}}(s) \|_{L^{\infty}(\T)}) \dd s. \label{function_a_t}
\end{align}
\begin{proof} We only give the proof of \eqref{bound_rho_f}, \eqref{decay_f_in_v} as the stability estimate \eqref{eq:wasserstein-stab} is precisely the one given in Theorem 1.9 of \cite{Hauray}.
We recall that for $t \in [0,T]$ we have
\begin{align*}
    f(t,x,v) = f_0 ( X_{\delta}(0,t,x,v), V_{\delta}(0,t,x,v) ) \quad \textnormal{ for a.e } (x,v) \in \T \times \R,
\end{align*}
where $(X_{\delta},V_{\delta})$ are the characteristics of the Vlasov--Poisson system $\textnormal{(VP)}_{\delta}.$ By Definition \ref{admissible_initial_data} of $ \mathcal{A}$, there exists a non-increasing function $g_0 : \R^{+} \longrightarrow \R^{+}$ in $\to  L^1(\R^+;(1+|v|)dv)$ such that $f_0(x,v) \leq g_0(|v|).$ So we get,
\begin{align*}
    f_{\delta}(t,x,v) \leq g_0 (|V_{\delta}(0,t,x,v)|).
\end{align*}
Using the characteristics we have $v- V_{\delta}(0,t,x,v)  = \displaystyle \int_{0}^{t} E_{\delta}(s,X_{\delta}(s,t,x,v)) \dd s.$
Thus,
\begin{align*}
    |v - V_{\delta}(0,t,x,v)| \leq \int_{0}^{t} \| E_{\delta}(s) \|_{L^{\infty}(\T)} \leq \frac{t}{2} (\| f_0 \|_{L^{1}(\T \times \R)} + L),
\end{align*}
where we used the estimate \eqref{bound_Lipschitz_K} to get the last bound. By triangular inequality we have
\begin{align*}
    |V_{\delta}(0,t,x,v)| \geq |v| - \frac{t}{2} (\| f_0 \|_{L^{1}(\T \times \R)} + L).
\end{align*}
For $|v| \leq \frac{t}{2} (\| f_0 \|_{L^{1}(\T \times \R)} + L)$ we have $g_0(|V_{\delta}(0,t,x,v)|) \leq g_0(0)$ and for $|v| > \frac{t}{2} (\| f_0 \|_{L^{1}(\T \times \R)} + L)$ we have $g_0(|V_{\delta}(0,t,x,v)|) \leq g_0\left( |v| -\frac{t}{2} (\| f_0 \|_{L^{1}(\T \times \R)} + L) \right).$
We therefore get
\begin{equation}
\begin{aligned}
    \rho_{f_{\delta}}(t, x) = \int_{\R} f_{\delta}(t,x,v) \dd v  &\leq \int_{\R} g_0(|V_{\delta}(0,t,x,v)|) \dd v \\
    &\leq t g_0(0) (\| f_0 \|_{L^{1}(\T \times \R)} + L) + \|g_0\|_{L^{1}(\R^{+})}.
\end{aligned}
\end{equation}
It yields \eqref{bound_rho_f}.  We eventually glean \eqref{decay_f_in_v} with
\begin{align*}
g_{T}(|v|) = g_0(0) \mathrm{1}_{|v| < \frac{T}{2} (\| f_0 \|_{L^{1}(\T \times \R)} + L)} + \mathrm{1}_{|v| \geq \frac{T}{2} (\| f_0 \|_{L^{1}(\T \times \R)} + L)} g_0\left( |v| -\frac{T}{2} (\| f_0 \|_{L^{1}(\T \times \R)} + L)\right).\nonumber
\end{align*}
\end{proof}

\end{proposition}

\section{The \textit{multi-fluid} approximation}
\label{sec:def_multi-fluid_approx_main_result}
\subsection{Approximation of the initial data}
The first step of our method consists in approximating any $f_0 \in \mathcal{A}$ (recall Definition~\ref{admissible_initial_data}) by a \textit{multi-fluid} distribution of the form \eqref{eq:multifluid_F}, with a quantitative control of the approximation error. A natural strategy is to introduce a sequence of uniform velocity grids $(v^N)^{N \in \mathbb{N}^{\star}}$ with grid size $h:= \frac{(B-A)}{N}$ defined for every $N \in \N^{\star},$ by
\begin{align}
    v_{\ell+\frac{1}{2}}^{N} = v_{\ell}^{N} + \frac{h}{2} \quad \textnormal{ where } v_{\ell}^{N} = A + \ell h \textnormal{ for any } \ell \in \Z,
\end{align}
where $A< B$ are fixed numerical parameters. A natural approximation, seemingly related to the Multi-Stream approach \cite{Ghizoo}, consists in defining
\begin{align}
\label{f0:sec5}
    &f_0^{N}(x,v) = \sum_{\ell \in \Z} \rho_0^{\ell}(x) \o \delta_{v = u_0^{\ell}(x)},
\end{align}
where
\begin{align}
    &\rho_{0}^{\ell}(x) = \int_{v_{\ell-\frac{1}{2}}^{N}}^{v_{\ell+\frac{1}{2}}^{N}} f_0(x,v) \dd v,\quad  \rho_{0}^{\ell}(x) u_{0}^{\ell}(x) = \int_{v_{\ell-\frac{1}{2}}^{N}}^{v_{\ell+\frac{1}{2}}^{N}} v f_0(x,v) \dd v. \label{loc_J}
\end{align}
This construction is known to break down as soon as the Euler--Poisson dynamics makes at least one fluid velocity becomes a multi-valued function of $x$. 
Consequently, such an approximation is unsuitable for long-time simulations. Our approach differs from the construction \eqref{f0:sec5}-\eqref{loc_J}. We approximate $f_0$ by its push-forward under a fixed transport map associated with the velocity grid. More precisely, the transport map we consider sends all the mass located in the cell
$
\lbrace x \rbrace \times
[v_{\ell-\frac{1}{2}}^{N},v_{\ell+\frac{1}{2}}^{N})
$
to its center, namely the point
$
( x ,v_\ell^{N} ).
$
This may be seen as an approximation of the local current densities \eqref{loc_J} where all particles are assumed to have the same constant velocity in the cell.
A proper definition of the transport map is
\begin{equation} \label{def_transport_map}
    G^{N}(x,v) = (x,v_{\ell}^{N}) \text{ if } v_{\ell-\frac{1}{2}}^{N} \leq v < v_{\ell+\frac{1}{2}}^{N}, \quad \textnormal{ for any } (x,v) \in \T \times \R.
\end{equation}
When the initial data is a non negative function $f_{0} \in L^{1}(\T \times \R)$, its push-forward by the map $G^{N}$ is given by 
\begin{align} \label{eq:multi_fluid_initial}
    &G^{N} \# f_0 (x,v) = \sum_{\ell \in \Z} \rho^{\ell}_{0}(x) \o \delta_{v = v_{\ell}^{N}}, \quad \textnormal{ for any } N \in \N^{\star},
\end{align}
where
\begin{align} \label{eq:multi_fluid_initial_2}
    \rho^{\ell}_{0}(x) = \int_{v_{\ell-\frac{1}{2}}^{N}}^{v_{\ell+\frac{1}{2}}^{N}} f_{0}(x,v) \dd v, \quad \ell \in \Z.
\end{align}
This is exactly a \textit{multi-fluid} representation \eqref{eq:multifluid_F} where the initial fluid velocities are constant. One can note that $G^{N}$ is a linear projection operator in the sense that
\begin{align}
    \forall f \in L^{1}(\T \times \R), \: f \geq 0, \quad G^{N} \# G^{N} \# f  = G^{N} \# f.
\end{align}
It is now straightforward to prove that such an approximation actually converges to $f_0$ when the grid is refined. More precisely we have:
\begin{lemme}[Projection error] \label{projection_error} For every non negative $f \in L^{1} (\T \times \R; (1+|v|)\dd x \dd v)$ and $N \in \N^{\star},$ we have
\begin{align} \label{initial_error_estimate}
   & \mathcal{W}_{1}(G^{N} \# f, f) \leq \frac{h}{2} \|f\|_{L^{1}(\T \times \R)}.
\end{align}
\begin{proof} Thanks to the bound \eqref{eq:bound_W1_by_Monge}, there holds
\begin{align*}
    \mathcal{W}_{1}(G^{N} \# f,f) \leq \int_{\T \times \R} d  \big( (x,v), G^{N}(x,v) \big) f(x,v) \dd x \dd v. 
\end{align*}
A direct computation yields,
\begin{align*}
   & \int_{\T \times \R} d  \big( (x,v), G^{N}(x,v) \big) f(x,v) \dd x \dd v \\
   &\;\;\;\;\;\;\;\;\;\;\;\;= \sum_{\ell \in \Z} \int_{\T \times [v_{\ell-\frac{1}{2}}^{N},v_{\ell+\frac{1}{2}}^{N}) } |v-v_{\ell}^{N}| f(x,v) \dd x \dd v \leq \frac{h}{2} \| f \|_{L^{1}(\T \times \R)},
\end{align*}
hence the lemma.
\end{proof}
\end{lemme}
A consequence of Propositions \ref{lower_bound_local_existence_time} and \ref{Maxime_Hauray_theorem} is the propagation of the initial error on a short time interval.
\begin{proposition}[Short time convergence of the \textit{multi-fluid} approximation] Let $f_0 \in \mathcal{A}$ (recall Definition~\ref{admissible_initial_data}). Then, for every $0 \leq \delta < L$, $0 \leq \eta \leq 1$ and $N \in \N^{\star},$ we have,
\begin{align} \label{eq:multi_fluid_error}
    &\forall t \in [0,T^{N}_{\delta,\eta}], \quad \mathcal{W}_{1}(f_{\delta}(t),f^{N}_{\delta}(t)) \leq e^{a(t)} \mathcal{W}_{1}(f_0,f^{N}_{\delta}(0)),
\end{align}
where
\begin{align}
    &T_{\delta,\eta}^{N} := \sup \big\lbrace t \geq 0 \: : \:  \sup_{\ell \in \Z} \int_{0}^{t} \| \partial_{x} u^{\ell}_{N,\delta}(s) \|_{L^{\infty}(\T)} \dd s \leq \eta \big \rbrace,
\end{align}
and $f_{\delta}$ and $f^{N}_{\delta}$ denote respectively the solutions to the Vlasov--Poisson $\textnormal{(VP)}_{\delta}$ with respective initial data $f_0$ and $f^{N}_{\delta}(0) = G^{N} \# f_0$ and where $a(t)$ is given by \eqref{eq:wasserstein-stab}.
\end{proposition}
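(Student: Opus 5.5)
The estimate is a direct application of the weak--strong stability of Proposition~\ref{Maxime_Hauray_theorem}, with $f_\delta$ as the strong solution and the \emph{multi-fluid} solution $f^N_\delta$ as the measure solution $\nu_\delta$. The only work is to check that $f^N_\delta$ is a legitimate measure solution of $\textnormal{(VP)}_{\delta}$ on the whole interval $[0,T^N_{\delta,\eta}]$, and that its initial datum has finite first moment.

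\textbf{Step 1: $f^N_\delta$ exists on $[0,T^N_{\delta,\eta}]$.} The initial datum $G^N\#f_0$ has the form \eqref{eq:multi_fluid_initial} with constant velocities $u^\ell_0=v^N_\ell$. The hypotheses of Theorem~\ref{main_theorem} hold with $m=0$ and $\lambda^\ell=v^N_\ell$:
\[
\sup_\ell\|u^\ell_0-\lambda^\ell\|_{W^{1,\infty}}=0,
\qquad
\sum_\ell\|\rho^\ell_0\|_{L^\infty}\le \sum_\ell\int_{v^N_{\ell-\frac12}}^{v^N_{\ell+\frac12}} g(|v|)\,\dd v \le 2\|g\|_{L^1}<+\infty .
\]
The middle inequality uses $f_0\le g(|v|)$. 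So Theorem~\ref{main_theorem} provides the multi-fluid solution $f^N_\delta$, a weak solution of $\textnormal{(VP)}_{\delta}$. By \eqref{def_existence_time} it exists as long as $\sup_\ell\int_0^t\|\partial_x u^\ell_{N,\delta}\|_{L^\infty}\,\dd s$ stays finite.

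On $[0,T^N_{\delta,\eta}]$ this quantity is at most $\eta$, so $T^N_{\delta,\eta}\le T_{N,\delta}$. Here I use continuity of $t\mapsto \sup_\ell\int_0^t\|\partial_x u^\ell\|$, which comes from the uniform bounds \eqref{uniform_estimate_ell}. If the supremum defining $T^N_{\delta,\eta}$ equals $T_{N,\delta}$, I restart the theorem from that time to extend slightly beyond it. For $\delta>0$, Proposition~\ref{lower_bound_local_existence_time} also guarantees that this interval is nontrivial.

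The case $\delta=0$ needs care. Theorem~\ref{main_theorem} covers $\delta=0$, but the lower bound of Proposition~\ref{lower_bound_local_existence_time} does not. That is harmless: the statement only concerns $t\le T^N_{\delta,\eta}$, whatever its value.

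\textbf{Step 2: finite first moment.} We have
\[
\int|v|\,G^N\#f_0 \;\le\; \int(|v|+h/2)\,f_0 \;<\;\infty,
\]
since $g\in L^1((1+|v|)\dd v)$.

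\textbf{Step 3: apply \eqref{eq:wasserstein-stab}.} Apply the stability estimate on $[0,t]$ for any $t\le T^N_{\delta,\eta}$. The horizon $T$ of the proposition can be taken to be $T^N_{\delta,\eta}$, or any larger time, because $f_\delta$ exists globally. Since $f_\delta(0)=f_0$ and $\nu_\delta(0)=f^N_\delta(0)$, this yields \eqref{eq:multi_fluid_error}, with $a(t)$ given by \eqref{function_a_t}.

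\textbf{Main obstacle.} The only delicate point is Step~1: making sure the multi-fluid description, and hence the weak-solution property, holds up to and including $T^N_{\delta,\eta}$. The infinite index set $\ell\in\Z$ must also satisfy the summability required by Theorem~\ref{main_theorem}. Both follow from the bounds above.
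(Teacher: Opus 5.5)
Your proposal is correct and follows the paper's route. The paper states this proposition only as a consequence of Propositions \ref{lower_bound_local_existence_time} and \ref{Maxime_Hauray_theorem}, i.e.\ Hauray's weak--strong estimate with $f_\delta$ as the strong solution and the multi-fluid solution $f^N_\delta$ as the measure solution on $[0,T^{N}_{\delta,\eta}]$. Your Steps 1--2 make explicit the hypotheses the paper leaves implicit: existence up to $T^{N}_{\delta,\eta}$ via Theorem \ref{main_theorem}, the summability of $\sum_\ell\|\rho^\ell_0\|_{L^\infty}$, and the finite first moment of $G^N\#f_0$.
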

Thus, as long as the fluid velocities remain Lipschitz in space, we propagate the initial approximation error (of order $h$) up to an exponential factor in which the rate of increase is independent of $\delta > 0$ thanks to Proposition \ref{Maxime_Hauray_theorem}. 

\subsection{Remapping}\label{sec:remapping}
A drawback of the \textit{multi-fluid} formulation is the appearance of shocks in the Burgers dynamics for the velocities. These shocks are bound to occur when the force is non zero (even if it decays in time), but they shouldn't be seen as a true singularity: they only correspond to a failure of the \textit{multi-fluid} parametrization.
To overcome this issue, we define a remapping operator which consists in restarting the \textit{multi-fluid} parametrization before a possible shock. Before a shock, the \textit{multi-fluid} solution belongs to the set
\begin{align}
    K := \Big \lbrace \sum_{\ell \in \Z} \rho^{\ell}(x) \otimes \delta_{v = u^{\ell}(x)}  \: : \: (\rho,u) \in \big( L^{\infty}(\T;\R^{+}) \times W^{1,\infty}(\T)\big)^{\Z}  \Big \rbrace.
\end{align}
\begin{definition}[Remapping operator]
\label{def:remap} Fix $N \in \N^{\star}$.
The remapping operator is defined for every $g\in K$ by
\begin{align}
    &\mathcal{R}^{N}g(x,v) = \sum_{k \in \Z} \rho^{k}(x) \o \delta_{v = v_{\ell(k,x)}^{N}},
\end{align}
where
\begin{align}
\ell(k,x) = \Big \lfloor \frac{u^{k}(x) - A}{h} + \frac{1}{2} \Big \rfloor \textnormal{ when } g(x, v)= \sum_{k \in \Z} \rho^{k}(x) \o \delta_{v = u^{k}(x)}.
\end{align}
\end{definition}
The remapping procedure consists in applying the operator $\mathcal{R}^{N}$ on the \textit{multi-fluid} approximation \eqref{eq:multifluid_F} at increasing discrete times $0 =: s_{0} < s_1 < \dots < s_r <  T \leq s_{r+1}$  each time before the appearance of shocks; a practical condition to go from the $k$-th remapping (starting at time $s_k$) to the $(k+1)$-th one (starting at time $s_{k+1}$) consists in defining $s_{k+1}\geq s_k$ as
\begin{align} \label{def_s_k}
s_{k+1}= \sup \left\{ t \geq s_k, \, \underset{ \ell \in \Z } \sup \int_{s_k }^{t} \| \partial_x u^{\ell}_{N,\delta}(t) \|_{L^{\infty}(\T)} \dd t \leq  \eta \right\}
\end{align}
where $0 < \eta < 1$ is a fixed numerical parameter. To control the error over an arbitrary time interval $[0,T]$, we shall propagate the initial approximation error and the remapping errors on each interval $[s_{k},s_{k+1}).$  We emphasize that the remapping procedure is not merely a numerical artifact, but is essential to reach long-time asymptotics. Its drawback is the additional approximation error introduced each time the remapping operator is applied. As a consequence, the approximate solution develops discontinuities at the remapping times $s_k$, $k\geq 1$. Each remapping thus generates an additional error, which we are able to quantify and control. 
\begin{proposition}[Stability properties of the remapping operator] For every $N \in \N^{\star}$ and $g(x,v) = \displaystyle \sum_{k \in \Z } \rho^{k}(x) \o \delta_{v = u^{k}(x)} \in K$ where $\rho^{k} \geq 0$ we have,
\begin{align}
&\mathcal{R}^{N} g (x,v) = G^{N} \#g (x,v) =  \sum_{\ell \in \Z} \tilde{\rho}^{\ell}(x) \o \delta_{v = v_{\ell}^{N}}, \label{remap_belongs_to_K}
\end{align}
where $ \tilde{\rho}^{\ell}(x)  = \sum_{k \in \Z} \rho^{k}(x) \mathrm{1}_{ v_{\ell-\frac{1}{2}}^{N} \leq u^{k}(x) < v_{\ell+\frac{1}{2}}^{N}}.$
Furthermore,
\begin{align}
&\int_{\R} \mathcal{R}^{N} g(x,\dd v)   =  \int_{\R} g(x,\dd v), \label{charge_preserving}\\
 & \int_{\T \times \R}  \mathcal{R}^{N} g(\dd x,\dd v)  = \int_{\T \times \R} g(\dd x ,\dd v), \label{mass_preserving}\\
 & \mathcal{W}_{1}(\mathcal{R}^{N} g,g)  \leq \frac{h}{2} g(\T \times \R) \label{w1_stab},\\
& \sum_{\ell \in \Z} \| \tilde{\rho}^{\ell} \|_{L^{\infty}(\T)} \leq \left( \frac{2 V}{h} + 1 \right) \sum_{\ell \in \Z} \| \rho^{\ell} \|_{L^{\infty}(\T)}, \quad V = \underset{ k \in \Z} \sup \| u^{k} - v_{k}^{N} \|_{L^{\infty}(\T)}. \label{estimate_sum_linf_after_remap}
\end{align}
\begin{proof}
Let $g \in K$ in the form $g(x,v) = \displaystyle \sum_{k \in \Z} \rho^{k}(x) \o \delta_{v = u^{k}(x)}$. Using Fubini-Tonelli we have for almost every $x \in \T$,
\begin{align*}
    \sum_{\ell \in \Z} \tilde{\rho}^{\ell}(x) \o \delta_{v = v_{\ell}^{N}} = \sum_{k \in \Z} \rho^{k}(x)  \sum_{\ell \in \Z} \mathrm{1}_{ v_{\ell-\frac{1}{2}}^{N} \leq u^{k}(x) < v_{\ell+\frac{1}{2}}^{N}} \o \delta_{v=v_{\ell}^{N}} = \mathcal{R}^{N}g(x,v).
\end{align*}
As a consequence a direct computation shows that $\mathcal{R}^{N}g = G^{N}\# g$. It proves \eqref{remap_belongs_to_K}. As for \eqref{charge_preserving}, we integrate in velocity and use again Fubini-Tonelli to get
\begin{align*}
&\int_{\R}  \mathcal{R}^{N} g(x,\dd v) = \sum_{\ell \in \Z} \sum_{k \in \Z} \rho^{k}(x) \mathrm{1}_{ v_{\ell-\frac{1}{2}}^{N} \leq u^{k}(x) < v_{\ell+\frac{1}{2}}^{N}} = \sum_{k \in \Z} \rho^{k}(x) \mathrm{1}_{-\infty \leq u^{k}(x) < +\infty} = \sum_{k \in \Z} \rho^{k}(x).
\end{align*}
We readily obtain \eqref{mass_preserving}.
To prove \eqref{w1_stab}, we use the fact that $\mathcal{R}^{N}g = G^{N} \# g$ so thanks to the bound \eqref{eq:bound_W1_by_Monge} we get
\begin{align*}
    \mathcal{W}_{1}(g,\mathcal{R}^{N}g) =  \mathcal{W}_{1}(g,G^{N}\#g) &\leq \int_{\T \times \R} d( (x,v); G^{N}(x,v)) g(\dd x, \dd v) \\
    &= \sum_{k \in \Z} \int_{\T} \rho^{k}(x) |u^{k}(x) - v_{\ell(k,x)}^{N}| \dd x,
\end{align*}
where $\ell(k,x) = \Big \lfloor \frac{u^{k}(x) - A}{h} + \frac{1}{2} \Big \rfloor. $ Since for every $k \in \Z$ and almost every $x \in \T,$ $|u^{k}(x) - v_{\ell(k,x)}^{N}| \leq \frac{h}{2}$ we obtain
\begin{align*}
    \mathcal{W}_{1}(\mathcal{R}^{N}g,g) \leq \frac{h}{2} g(\T \times \R).
\end{align*}
We prove the last property \eqref{estimate_sum_linf_after_remap}. Fix $\ell \in \Z$. Thanks to \eqref{remap_belongs_to_K}, for almost every $x \in \T$
\begin{align*}
    \tilde{\rho}^{\ell}(x) = \sum_{k \in \Z} \rho^{k}(x) \mathrm{1}_{v_{\ell-\frac{1}{2}}^{N} \leq u^{k}(x) < v_{\ell+\frac{1}{2}}^{N} }.
\end{align*}
The rest of the proof consists in counting the number of fluid velocies that are localized in the interval $[v_{\ell - \frac{1}{2}}^{N},v_{\ell + \frac{1}{2}}^{N}). $  By definition of $V$ we have for every $x \in \T,$
\begin{align*}
   -V +  v_{k}^{N} - v_{\ell}^{N} \leq  u^{k}(x) - v_{\ell}^{N} \leq v_{k}^{N} - v_{\ell}^{N} +  V.
\end{align*}
Therefore for every $ k \in \Z$,
\begin{align}
    (k-\ell)h - V \geq \frac{h}{2} \textnormal{ or } (k-\ell)h + V < -\frac{h}{2} \Longrightarrow \forall x \in \T, \mathrm{1}_{v_{\ell-\frac{1}{2}}^{N} \leq u^{k}(x) < v_{\ell+\frac{1}{2}}^{N} }  = 0.
\end{align}
By contraposition, we thus have for almost every $x \in \T,$
\begin{align*}
    \tilde{\rho}^{\ell}(x) \leq \sum_{k \in C(\ell)} \rho^{k}(x) \mathrm{1}_{v_{\ell-\frac{1}{2}}^{N} \leq u^{k}(x) < v_{\ell+\frac{1}{2}}^{N} }.
\end{align*}
where 
\begin{align}
    C(\ell) := \Big \lbrace k \in \Z \: : \:  (k-\ell)h - V < \frac{h}{2}   \textnormal{ and } (k-\ell)h + V \geq -\frac{h}{2} \Big \rbrace.
\end{align}
We thus get 
\begin{align}
    \| \tilde{\rho}^{\ell} \|_{L^{\infty}(\T)} \leq \sum_{k \in C(\ell)} \| \rho^{k} \|_{L^{\infty}(\T)},
\end{align}
By symmetry, observe that $k \in C(\ell)$ if and only if $ \ell \in C(k).$ 
Therefore, by an additional summation and using Fubini we get
\begin{align}
    \sum_{\ell \in \Z} \| \tilde{\rho}^{\ell} \|_{L^{\infty}(\T)}  \leq \sum_{k \in \Z}\| \rho^{k} \|_{L^{\infty}(\T)} \sum_{\ell \in C(k)} 1.
\end{align}
We have $\sum_{\ell \in C(k)} 1 \leq \frac{ 2V}{h} +1$ which eventually yields \eqref{estimate_sum_linf_after_remap}.
\end{proof}
\end{proposition}
Although the following regularization is not required for our analysis, it is customary, for numerical purposes, to smooth the Dirac masses by convolution with a kernel in the velocity variable, thereby producing numerical approximations of elements of the set $K$. This regularization could have been introduced into the remapping operator $\mathcal{R}^{N}$ at the price of an additional error, which can also be quantified and controlled.
\begin{lemme}[Regularization by convolution in the velocity variable] \label{convol_lemma}Let $\psi \in \mathscr{C}_{c}^{0}(\R)$ be non negative, even and of unit total mass. Let $( \psi_{\varepsilon}(\cdot) = \frac{1}{\varepsilon} \psi( \frac{\cdot}{\varepsilon}) )_{\varepsilon > 0}$ be the associated approximation of unity. For any $g \in K$ and $\varepsilon > 0$ we have
\begin{align} \label{regularization_estimate_v}
\mathcal{W}_{1}(g \star_{v} \psi_{\varepsilon}, g) \leq C_{\psi} \varepsilon g(\T \times \R),
\end{align}
where $C_{\psi} = \int_{\R} \psi(v) |v| \dd v$  and the convolution in the velocity variable is given 
\begin{align}
    (g \star_{v} \psi_{\varepsilon})(x,v) = \sum_{\ell \in \Z} \rho^{\ell}(x) \psi_{\varepsilon}(v-u^{\ell}(x)).
\end{align}
\begin{proof}
Observe that $g \star_{v} \psi_{\varepsilon}$ is a probability measure since $\psi_{\varepsilon}$ is of unit total mass. We use here the dual formulation of the Wasserstein-1 distance (see \cite{Santambrogio}),
\begin{align*}
    \mathcal{W}_1(g \star_{v} \psi_{\varepsilon}, g) &=\sup_{\| \nabla \varphi \|_\infty =1} \langle  g \star_{v} \psi_{\varepsilon}- g, \varphi\rangle  \\
    &=\sup_{\| \nabla \varphi \|_\infty =1} \sum_{\ell \in \Z} \int_{\T \times \R} \rho^{\ell}(x) \varphi(x,v) \big( \psi_{\varepsilon}(v-u^{\ell}(x))  - \delta_{v = u^{\ell}(x)} \big) \dd v \dd x\\
    & = \sup_{\| \nabla \varphi \|_\infty =1} \sum_{\ell \in \Z} \int_{\T} \rho^{\ell}(x) \int_{\R}  \varphi(x,v) \big( \psi_{\varepsilon}(v-u^{\ell}(x))  - \delta_{v = u^{\ell}(x)} \big) \dd v \dd x \\
    & = \sup_{\| \nabla \varphi \|_\infty =1} \sum_{\ell \in \Z} \int_{\T} \rho^{\ell}(x) \int_{\R}  \varphi(x,v + u^{\ell}(x)) \big( \psi_{\varepsilon}(v)  - \delta_{v = 0} \big) \dd v \dd x.
\end{align*}
For every $x \in \T$ and $\ell \in \Z,$ $\varphi(x,\cdot+u^{\ell}(x)) \in \textnormal{Lip}_{1}(\R)$, so we get

\begin{align*}
    \int_{\R}  \varphi(x,v + u^{\ell}(x)) \big( \psi_{\varepsilon}(v)  - \delta_{v = 0} \big) \dd v \leq \mathcal{W}_{1}( \psi_{\varepsilon}, \delta_{0}).
\end{align*}
Since $\rho^{\ell} \geq 0$ for every $\ell$ we eventually obtain
\begin{align*}
    \mathcal{W}_1(g \star_{v} \psi_{\varepsilon}, g) \leq \sum_{\ell \in \Z} \int_{\T} \rho^{\ell}(x) \dd x \mathcal{W}_{1}(\psi_{\varepsilon},\delta_{0}) \leq \varepsilon \int_{\T \times \R}   g(\dd x, \dd v) C_{\psi}
\end{align*}
where $C_{\psi} = \int_{\R} \psi(v) |v| \dd v.$
\end{proof}
\end{lemme}
\subsection{Main result}
Fix $T > 0$, $0 < \eta < 1$ and $f_0 \in \mathcal{A}.$ The approximate \textit{multi-fluid} solution is defined for every $N \in \N^{\star}$ and $0 < \delta < L$ by induction,
\begin{align} \label{def_remapping_procedure}
\begin{cases}
f^{N}_{\delta}(0) = G^{N} \# f_0 \\
f^{N}_{\delta}(s_{k+1}) = \mathcal{R}^{N} \varphi_{s_{k+1},s_{k}}^{\delta} \# f^{N}_{\delta}(s_{k}), \quad k \in \N,
\end{cases}
\end{align}
and between two consecutive remapping times $s_{k} < s_{k+1}$ by
\begin{align} \label{def_f_approx}
    f^{N}_{\delta}(t) = \varphi_{t,s_{k}}^{\delta} \# f^{N}_{\delta}(s_k), \quad t \in (s_{k},s_{k+1})
\end{align}
where $\varphi_{t, s_{k}}^{\delta}$ denotes the non linear flow associated with the regularized Vlasov--Poisson system $\textnormal{(VP)}_{\delta}$ and where the sequence of remapping times is given by
\begin{equation} \label{def_sk}
\begin{cases}
s_0 = 0,\\
s_{k+1} :=   \sup \left\{ t \geq s_k, \, \underset{ \ell \in \Z } \sup \displaystyle \int_{s_k }^{t} \| \partial_x u^{\ell}_{N,\delta}(s) \|_{L^{\infty}(\T)} \dd s \leq  \eta \right\},  \quad k \in \N.
\end{cases}
\end{equation}
It writes under the form
\begin{align}
\displaystyle f^{N}_{\delta}(t,x,v) = \sum_{\ell \in \Z} \rho^{\ell}_{N,\delta}(t,x) \o \delta_{v = u^{\ell}_{N,\delta}(t,x)}.
\end{align}
A key feature of the remapping operator is that, after each remapping, the approximate solution is restarted with constant fluid velocities. Consequently, as long as $f^N_\delta$ and the sequence $(s_k)_{k\in\mathbb{N}}$ are well-defined on $[0,T]$, Proposition \ref{lower_bound_local_existence_time}, together with the preservation of the total mass under the remapping procedure, yields the uniform lower bound
\begin{align}
    s_{k+1}-s_{k} \geq  \sqrt{\frac{ \eta(1-\eta) \delta}{ \| f_0 \|_{L^{1}(\T \times \R)} + L } }, \quad k \in \N.
\end{align} 
Hence, the final time $T$ is reached after at most $r_{T} \sim \frac{T}{\sqrt{\delta}}$ iterations.
Our main result shows, in particular, that the construction \eqref{def_sk}--\eqref{def_f_approx} is well-defined on the whole interval $[0,T]$. Moreover, it provides an error estimate between the solution $f$ of the Vlasov--Poisson system \textnormal{(VP)} and its \textit{multi-fluid} approximation $f_{\delta}^{N}$, with a convergence rate of order $\mathcal{O}(h^{2/3})$. Before stating the main result, we recall that if $f_0 \in \mathcal{A}$ then $\int_{\R} \| f_0(\cdot,v) \|_{L^{\infty}(\T)} \dd v < +\infty$ and
\begin{align}
  \sum_{\ell \in \Z} \Big \| \int_{v_{\ell-\frac{1}{2}}^{N}}^{v_{\ell+\frac{1}{2}}^{N}} f_0(\cdot,v) \dd v \Big \|_{L^{\infty}(\T)} \leq \int_{\R} \| f_0(\cdot,v) \|_{L^{\infty}(\T)} \dd v.
\end{align}
Therefore for every $f_0 \in \mathcal{A}$
\begin{align}
    \underset{ N \in \N^{\star} } \sup \: \sum_{\ell \in \Z} \Big \| \int_{v_{\ell-\frac{1}{2}}^{N}}^{v_{\ell+\frac{1}{2}}^{N}} f_0(\cdot,v) \dd v \Big \|_{L^{\infty}(\T)} \leq \int_{\R} \| f_0(\cdot,v) \|_{L^{\infty}(\T)} \dd v.
\end{align}
So initial data $f_0 \in \mathcal{A}$ are such that $G^{N} \# f_0$ fits into the assumptions of Theorem \ref{main_theorem},  uniformly with respect to the grid size $h$,  taking $u^\ell_0 = v^N_\ell$ and $\lambda^\ell= v^N_\ell$. Our main result is the following.
\begin{theoreme}[Convergence of the \textit{multi-fluid} approximation] Fix $T > 0$, $f_0 \in \mathcal{A}$  and $\eta \in (0,1)$.  For every $0 < \delta < L$ and $N \in \N^{\star}$, the approximate solution \eqref{def_remapping_procedure}-\eqref{def_sk}
is well-defined on $[0,T].$ The approximate solution $f^{N}_{\delta} :  [0, T] \longrightarrow K$ is piece-wise continuous in time with respect to the $\mathcal{W}_{1}(\T \times \R)$ topology with
\begin{align} \label{local_continuity_in_W1}
   \underset{t \rightarrow s_{k}^{+} } \lim f^{N}_{\delta}(t) =  f^{N}_{\delta}(s_{k}),  \quad \underset{t \rightarrow s_{k+1}^{-} } \lim f^{N}_{\delta}(t) = \varphi_{s_{k+1},s_{k}}^{\delta} \# f^{N}_{\delta}(s_k),\quad  k \in \N.
\end{align}
Furthermore, it satisfies the error estimate
\begin{align} 
 \!\!\! \underset{ t \in [0,T] } \sup \mathcal{W}_{1}(f(t), f^{N}_{\delta}(t))\leq  e^{a(T)} \Big(\mathcal{W}_{1}(G^{N}  \# f_0, f_{0}) + \delta\Big)+ \frac{h}{2}\|f_0\|_{L^{1}(\T \times \R)} \sum_{k = 0}^{r_{T}-1} e^{a(s_k)}, \label{main_error_estimate}
\end{align}
where the function $a$ is given in \eqref{function_a_t} and the final remapping iterate defined by
\begin{align}
   r_{T} := \inf \lbrace k \in \N \: : \: s_{k} \geq T \rbrace  \label{final_remapping_iterate}
\end{align}
satisfies
\begin{align}
    r_{T} \lesssim \frac{T}{\sqrt{\delta}}.
\end{align}
The error bound is minimal for $\delta \sim h^{\frac{2}{3}}.$ As a consequence, we have the convergence estimate
\begin{align}
    \underset{ t \in [0,T]} \sup \mathcal{W}_{1}\Big (f(t), f_{\delta}^{N}(t) \Big) = \mathcal{O}(h^{\frac{2}{3}}),\label{convergence_estimate_f}\\
    \underset{ t \in [0,T]} \sup \|E(t) - E^{N}_{\delta}(t) \|_{L^{1}(\T)} = \mathcal{O}(h^{\frac{2}{3}}). \label{convergence_estimate_E}
\end{align}
\end{theoreme}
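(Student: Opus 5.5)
I would split the error by the triangle inequality as
\[
\mathcal W_1(f(t),f^N_\delta(t)) \le \mathcal W_1(f(t),f_\delta(t)) + \mathcal W_1(f_\delta(t),f^N_\delta(t)),
\]
where $f_\delta$ is the solution of $\textnormal{(VP)}_\delta$ with the same initial datum $f_0$.

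\textbf{Step 1: VP versus regularized VP.} For the first term I would use a Hauray-type weak–strong argument in which the two solutions have the same data but different kernels. I would couple the characteristics $(X,V)$ and $(X_\delta,V_\delta)$ starting from the same point. The difference of the forces is then split as
\[
\big(K\star(\rho_f-1)\big)(X)-\big(K_\delta\star(\rho_{f_\delta}-1)\big)(X_\delta).
\]
\begin{itemize}
\item The kernel-difference contribution $(K-K_\delta)\star\rho_{f_\delta}$ is bounded in $L^\infty$ by $\|\rho_{f_\delta}\|_{L^\infty}\|K-K_\delta\|_{L^1}$. Since $\|K-K_\delta\|_{L^1}=O(\delta)$ and \eqref{bound_rho_f} bounds $\rho_{f_\delta}$ uniformly in $\delta$, this is $O(\delta)$.
\item The remaining term is handled exactly as in the proof of \eqref{eq:wasserstein-stab}.
\end{itemize}
A Gronwall argument then gives $\mathcal W_1(f(t),f_\delta(t))\lesssim e^{a(t)}\delta$, which produces the $e^{a(T)}\delta$ term in \eqref{main_error_estimate}.

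\textbf{Step 2: well-posedness of the scheme.} I would argue by induction on $k$.
\begin{itemize}
\item At each $s_k$ the datum $f^N_\delta(s_k)$ is a multi-fluid state with constant velocities $v^N_\ell$. Its total mass is preserved by \eqref{mass_preserving}.
\item Its densities have $\sum_\ell\|\tilde\rho^\ell\|_{L^\infty}<\infty$. This follows from \eqref{estimate_sum_linf_after_remap}, with $V\le\int_{s_k}^{s_{k+1}}\|E_\delta\|_{L^\infty}\le C(s_{k+1}-s_k)$ because $\|K_\delta\|_\infty=1/2$.
\item Hence Theorem \ref{main_theorem} applies with $\lambda^\ell=v^N_\ell$ and $m=0$, and Proposition \ref{lower_bound_local_existence_time} gives the uniform gap $s_{k+1}-s_k\ge c\sqrt{\delta}$.
\end{itemize}
It follows that $r_T\lesssim T/\sqrt\delta$ and the construction reaches $T$ in finitely many steps.

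\textbf{Step 3: piecewise continuity.} The one-sided limits \eqref{local_continuity_in_W1} follow from $\mathcal W_1$-continuity of pushforwards by the flow $\varphi^\delta_{t,s_k}$, which is continuous in $t$.

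\textbf{Step 4: the discrete recursion.} Set $e_k=\mathcal W_1(f_\delta(s_k),f^N_\delta(s_k))$. On $[s_k,s_{k+1})$ both $f_\delta$ and $f^N_\delta$ solve $\textnormal{(VP)}_\delta$. So \eqref{eq:wasserstein-stab}, restarted at time $s_k$ and with $f_\delta$ as the strong solution (bounded density uniformly in $\delta$), gives
\[
\mathcal W_1(f_\delta(t),f^N_\delta(t))\le e^{a(t)-a(s_k)}e_k .
\]
At $s_{k+1}$, \eqref{w1_stab} and mass conservation give
\[
e_{k+1}\le e^{a(s_{k+1})-a(s_k)}e_k+\tfrac h2\|f_0\|_{L^1},
\]
with $e_0=\mathcal W_1(G^N\#f_0,f_0)$. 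Unrolling, and using monotonicity of $a$, gives the sum $\tfrac h2\|f_0\|_{L^1}\sum e^{a(\cdot)}$. The indexing of the exponentials needs some care: the restarted stability estimate naturally produces $e^{a(t)-a(s_k)}$, so the weights $e^{a(s_k)}$ in \eqref{main_error_estimate} should come out after bounding each of these by $e^{a(T)}$ or $e^{a(s_k)}$ appropriately.

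\textbf{Step 5: rates.} The total bound is
\[
C\big(h+\delta+h\,r_T\big)\lesssim h+\delta+h\,\delta^{-1/2}.
\]
Balancing $\delta\sim h\delta^{-1/2}$ gives $\delta\sim h^{2/3}$ and hence \eqref{convergence_estimate_f}. For \eqref{convergence_estimate_E}, in one dimension
\[
\|E-E^N_\delta\|_{L^1}\le \|(K-K_\delta)\star(\rho-1)\|_{L^1}+\|K_\delta\star(\rho_f-\rho^N_\delta)\|_{L^1}.
\]
The first term is $O(\delta)$. The second is controlled by $\mathcal W_1$, since $E$ is essentially a primitive of $\rho-1$, so its $L^1$ difference is bounded by $\mathcal W_1$ of the $x$-marginals.

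The main obstacle is Step 1 together with the uniform-in-$\delta$ bound needed in Step 4. The delicate point is that the exponent $a$ must not depend on $\delta$ and must remain valid after restarting at each $s_k$. This relies on the strong solution always being $f_\delta$ with the bound \eqref{bound_rho_f}, never the multi-fluid one.
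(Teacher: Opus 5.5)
Your proposal follows essentially the same route as the paper: you pass through $f_\delta$ with the Hauray-type bound $\mathcal{W}_1(f(t),f_\delta(t))\le \delta e^{a(t)}$ (the paper cites this as estimate 2.19 of Hauray, where you sketch it), argue well-posedness by induction via \eqref{estimate_sum_linf_after_remap} and the $\sqrt{\delta}$ lower bound on $s_{k+1}-s_k$, use the recursion $e_{k+1}\le e^{a(s_{k+1})-a(s_k)}e_k+\frac{h}{2}\|f_0\|_{L^1(\T\times\R)}$, and balance with $\delta\sim h^{2/3}$; your treatment of $E$, which keeps the $(K-K_\delta)$ term, is slightly more careful than the paper's one-line duality bound. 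Your hesitation about the weights is warranted: unrolling gives $\sum_{j=1}^{k_t}e^{a(t)-a(s_j)}\le r_T e^{a(T)}$, which is all the rate argument (and the paper's Step 3) actually uses, but bounding these terms by the weights $e^{a(s_k)}$ of \eqref{main_error_estimate}, as the paper's iteration does without comment, is not legitimate in general (e.g.\ if all remappings occur early and the last interval $[s_{r_T-1},T]$ is long), so drop that alternative.
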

This convergence estimate applies to weak solutions. In particular, no regularity assumption is imposed on $f$, beyond the boundedness and integrability required to define the solution. The proof is restricted to the one-dimensional setting, as it relies crucially on a weak-strong stability estimate together with an $L^{\infty}([0,T]\times\mathbb{T})$ bound on the electric field, both of which are specific to the one-dimensional framework.
Our error estimate \eqref{main_error_estimate} provides a link between two well-established paradigms in the theory of non-collisional plasmas by establishing the convergence of the multi-fluid approximation as $N\longrightarrow+\infty$ over arbitrarily large time intervals. This result contributes to the unified framework introduced in \cite{BEH}, which establishes a connection between fluid and kinetic descriptions of non-collisional plasmas through the so-called \textit{multi-phasic} formulation.

\section{Proof of the main result} 
\label{sec:proof_main_result}
Fix $T  > 0$, $f_0 \in \mathcal{A}$ and $\eta \in (0,1)$. 

\emph{\textbf{Step 1: Well-definiteness of the approximate solution.}}
For every $N \in \N^{\star}$ and $0 < \delta < L$, we prove by induction the following statement:

\begin{align*}
\forall k \in \N, \: f^{N}_{\delta} \textnormal{ is well-defined on } [0,s_{k+1}].
\end{align*}

As for the base case, we have $f^{N}_{\delta}(0) = G^{N} \# f_0 \in K$. Since $f_0 \in \mathcal{A}$, $f^{N}_{\delta}(0)$ verifies the assumptions of Theorem \ref{main_theorem} with the regularity index $m= 0$ (taking $\lambda^\ell= v^N_\ell$). Thus, $f^{N}_{\delta} : [0,s_{1}) \longrightarrow K$ is well defined and continuous on $[0,s_{1})$ for the $\mathcal{W}_{1}(\T \times \R)$ topology. Moreover, by continuity in time, it has a limit as $t \longrightarrow s_{1}^{-}$. Using the Lagrangian form of the momentum equation, we have 
\begin{align}
   \underset{\ell \in \Z} \sup \|u_{N,\delta}^{\ell}(s_{1}^{-}) - v_{\ell}^{N} \|_{L^{\infty}(\T)} \leq s_1 (\|f_0\|_{L^{1}(\T \times \R)} + L).
\end{align}
It yields thanks to the remapping estimate \eqref{estimate_sum_linf_after_remap} and the continuity equation,
\begin{align}
    \sum_{\ell \in \Z} \| \rho^{\ell}_{N,\delta}(s_1) \|_{L^{\infty}(\T)} \leq \Big( 1 + \frac{2 s_1(\|f_0\|_{L^{1}(\T \times \R)} +  L)}{h} \Big) e^{\eta} \sum_{\ell \in \Z} \| \rho^{\ell}_{N,\delta}(s_0) \|_{L^{\infty}(\T)}.
\end{align}
Thus, $f^{N}_{\delta}(s_1)$ still verifies the assumption of Theorem \ref{main_theorem} (again taking $m=0$ and $\lambda^\ell= v^N_\ell$). So $f^{N}_{\delta}$ is well-defined on $[0,s_1].$
By immediate induction, we deduce that $f^{N}_{\delta}$ is well-defined on $[0,s_{k+1}]$ for every $k \in \N$. 
Moreover, we have
\begin{align}
    \sum_{\ell \in \Z} \| \rho^{\ell}_{N,\delta}(s_k) \|_{L^{\infty}(\T)} \leq \Big( 1 + \frac{2 s_{k}(\|f_0\|_{L^{1}(\T \times \R)} +  L)}{kh} \Big)^{k} e^{k\eta} \sum_{\ell \in \Z} \| \rho^{\ell}_{N,\delta}(s_0) \|_{L^{\infty}(\T)}.
\end{align}
Eventually observe that the sequence of remapping times satisfies
$s_{k} \gtrsim k \sqrt{\delta}. $ Thus, $\limsup s_k = +\infty$ and the final remapping time satisfies
\begin{align}
    r_{T} \lesssim \frac{T}{\sqrt{\delta}}.
\end{align}

\emph{\textbf{Step 2: Error estimate}.} 
For $k = 0,\dots,r_{T}$, we decompose the error between two consecutive remapping time $s_{k} \leq s_{k+1}$ using a triangular inequality as,
\begin{align}\label{triangle_inequality_error}
\mathcal{W}_{1}\Big(f^{N}_{\delta}( s_{k+1}), f_{\delta}(s_{k+1}) \Big)
&\leq \mathcal{W}_{1}\big(\mathcal{R}^{N} f^{N}_{\delta}(s_{k+1}^{-}), f^{N}_{\delta}(s_{k+1}^{-}) \big)
\\
&+ \mathcal{W}_{1}\big(  f^{N}_{\delta}(s_{k+1}^{-}), f_{\delta}(s_{k+1}) \big),\nonumber
\end{align}
where $f^{N}_{\delta}(s_{k+1}) = \mathcal{R}^{N} f^{N}_{\delta}(s_{k+1}^{-}).$
Thanks to \eqref{w1_stab} we have for the remapping error,
\begin{align*}
\mathcal{W}_{1}\big(\mathcal{R}^{N} f^{N}_{\delta}(s_{k+1}^{-}), f^{N}_{\delta}(s_{k+1}^{-}) \big) \leq \frac{h}{2}  \int_{\T \times \R} f_{\delta}^{N}(s_{k+1}^{-})(\dd x,\dd v).
\end{align*}
Thanks to the conservation of the total mass \eqref{mass_preserved}, \eqref{mass_preserving} we readily get by induction that
\begin{align*}
\mathcal{W}_{1}\big(\mathcal{R}^{N} f^{N}_{\delta}(s_{k+1}^{-}), f^{N}_{\delta}(s_{k+1}^{-}) \big) \leq \frac{h}{2}  \| f_0 \|_{L^{1}(\T \times \R)}.
\end{align*}
As for the consistency error, using the stability estimate \eqref{eq:multi_fluid_error} we get
\begin{align}
    \mathcal{W}_{1}\big(  f^{N}_{\delta}(s_{k+1}^{-}), f_{\delta}(s_{k+1}) \big) \leq e^{a_{k}(s_{k+1})} \mathcal{W}_{1}\big( f_{\delta}^{N}(s_k),f_{\delta}(s_k) \big)
\end{align}
where $ a_k(t):= 8 \int_{s_k}^{t} (1 +  \underset{\delta > 0} \sup \:  \| \rho_{f_\delta}(s) \|_{L^{\infty}(\T)}) \dd s.$ 
Eventually, we glean 
\begin{align} \label{recursive_error}
    &\mathcal{W}_{1}\Big(f^{N}_{\delta}(s_{k+1}), f_{\delta}(s_{k+1}) \Big) \leq  e^{a_{k}(s_{k+1})} \mathcal{W}_{1}(f_{\delta}^{N}(s_k),f_{\delta}(s_k))+ \frac{h}{2}\|f_0\|_{L^{1}(\T \times \R)}, \quad k \in \N.
\end{align}
For $t \in [0,T]$ there exists $k_{t} \in \N$ such that $t \in [s_{k_t},s_{k_{t}+1}).$ So we get
\begin{align}
    \mathcal{W}_{1}(f^{N}_{\delta}(t), f_{\delta}(t)) \leq e^{a_{k_{t}}(t)} \mathcal{W}_{1}(f_{\delta}^{N}(s_{k_{t}}), f_{\delta}(s_{k_{t}}))
\end{align}
and by iteration of \eqref{recursive_error} we get
\begin{align}
   \mathcal{W}_{1}(f^{N}_{\delta}(t), f_{\delta}(t)) \leq e^{a(t)} \mathcal{W}_{1}(f^{N}_{\delta}(0), f_{0}) + \frac{h}{2}\|f_0\|_{L^{1}(\T \times \R)} \sum_{k = 0}^{k_{t}} e^{a(s_k)}. 
\end{align}
Since the function $t \longmapsto a(t)$ is increasing we infer
\begin{align}
    \underset{ t \in [0,T] } \sup \mathcal{W}_{1}(f^{N}_{\delta}(t), f_{\delta}(t))\leq  e^{a(T)} \mathcal{W}_{1}(f^{N}_{\delta}(0), f_{0}) + \frac{h}{2}\|f_0\|_{L^{1}(\T \times \R)} \sum_{k = 0}^{r_{T}-1} e^{a(s_k)}.
\end{align}

\emph{\textbf{Step 3: Convergence estimate}.} 
To conclude, we again use a weak-strong stability estimate due to Hauray (see estimate 2.19 in \cite{Hauray}). For every $t \in [0,T]$, since $f_{\delta}(0) = f(0) = f_0,$ we have
\begin{align}
    \mathcal{W}_{1}(f(t),f_{\delta}(t)) \leq \delta e^{a(t)}.
\end{align}
So by a triangular inequality
\begin{align*}
  \underset{ t \in [0,T] } \sup \mathcal{W}_{1}(f(t),f_{\delta}^{N}(t))  &\leq e^{a(T)} \big( \mathcal{W}_{1}(G^{N} \# f_0, f_{0}) + \delta \big) + \frac{h}{2}\|f_0\|_{L^{1}(\T \times \R)} \sum_{k = 0}^{r_{T}-1} e^{a(s_k)}\\
  &\leq \frac{ e^{a(T)} \| f_0 \|_{L^{1}(\T \times \R)}}{2} \left(h + \frac{2\delta}{\| f_0 \|_{L^{1}(\T \times \R)} }+ hr_{T}\right),
\end{align*}
By duality, we infer a control of the error at the level of the electric field:
\begin{align}
  \underset{ t \in [0,T]} \sup \| E(t)- E^{N}_{\delta}(t)\|_{L^1(\T)} \leq \underset{ t \in [0,T] } \sup  \mathcal{W}_{1}(f(t),f^{N}_{\delta}(t)).
\end{align}
Since $r_T \lesssim \frac{T}{\sqrt{\delta}}$, the error bound is a function of $\delta$ of the form
\begin{align*}
    e(\delta) := h + \frac{2\delta}{\| f_0 \|_{L^{1}(\T \times \R)} }+ \frac{C h T}{\sqrt{\delta} }, \quad \delta > 0, 
\end{align*}
where $C$ is a constant that depends only on $\eta$ and on the initial data $f_0.$
It is minimal for $\delta \sim h^{\frac{2}{3}}$. Thus, we get the expected convergence rate.
\section{A numerical experiment} \label{sec:numerical_results}
This section is devoted to assessing the efficiency of our approach on the two-stream instability, a challenging test case for fluid-based methods \cite{besse-bag,crestetto-bag}. We combine our approach with a numerical discretization of the fluid equations based on a semi-Lagrangian method with cubic splines in space and a Strang splitting scheme in time. The initial data are discretized on a phase-space grid $(x_i,v_{\ell}) \subset [0,L) \times \R$ where 
\begin{align}
    &x_i = \frac{iL}{N_x}, \quad i = 0,\dots ,N_x -1\\
    &v_{\ell} = A + \ell h , \quad \ell =0 \dots  N_v.
\end{align}
Here, $N_x, N_v \in \N^{\star}$, $A,B \in \R$, $L> 0$ and $h = (B-A)/N_v$ denote the numerical parameters. The initial fluid densities are approximated at the grid points according to
\begin{align}
  &\rho^{\ell}_0(x_i) \approx  f_0(x_i,v_{\ell}) h, \quad i = 0,\dots, N_x -1, \: \ell = 0,\dots, N_v, 
\end{align}
where $f_0$ is chosen to be continuous here.
The particular choice of numerical discretization is not essential to our approach, and other numerical methods could equally be used to solve the fluid equations. We consider the two-stream instability with initial data of the form
\begin{align}
   &f_{0}(x,v) =  \frac{1}{2}( \mathcal{M}(v-u_0)  + \mathcal{M}(v+u_0))(1+ \epsilon \cos(kx) ), \quad (x,v) \in [0,L) \times \R,
\end{align}
where $\epsilon =10^{-3}$ is the size of the perturbation, $k=0.2$, $u_0=2.4$ and $\mathcal{M}$ is the centered \textit{Maxwellian} given by
\begin{equation}
    \mathcal{M}(v) = \frac{e^{-v^2/2}}{\sqrt{2 \pi}},  \quad v \in \R.
\end{equation}
We set $A = -8, B = 8$ and $L = 2\pi/k.$
The final time is $T = 50$, with a time step $\Delta t = 0.1$. We choose the remapping parameter $\eta=0.5$. In the numerical implementation, the remapping criterion is discretized from the continuous criterion \eqref{def_sk} using a simple rectangle rule for the time integral and finite differences to approximate the Lipschitz norms of the fluid velocities. For the numerical illustrations, we also incorporate a smoothing step by convolution into the remapping operator, using a cubic-spline kernel with support of size $h$. This modification does not affect the analysis, as shown in Lemma \ref{convol_lemma}, and is introduced solely to produce smoother visualizations of the distribution function. We compare our method, referred to as 'SL', with a standard semi-Lagrangian scheme applied directly to the Vlasov--Poisson system, denoted by 'ref' in the following figures. Both methods are computed on the same phase-space grid. During the simulation, the remapping operator is applied $16$ times. The implementation is not yet optimized, and a detailed study of the computational cost of the proposed approach compared with more standard methods, in particular in higher dimensions, is left for future work \cite{multi-fluid-num-study}.
Regarding the regularization parameter $\delta$ in the Coulomb kernel, the Poisson equation is solved numerically using a discrete Fourier transform. Thus, the effective regularization is expected to be at most of the order of the velocity-grid size $h$.
In Figure \ref{fig:elec_TSI}, we plot the time evolution of the electric energy  obtained by the two methods (in semi-$\log$ scale). In particular, we recover the instability rate $\gamma=0.2258$ predicted by the linear theory. We also plot the contours of the distribution function obtained by our method and the semi-Lagrangian scheme at the final time $T=50$.  Finally, in Figure \ref{fig:u_TSI}, we represent the graphs of few selected velocities before the appearance of a shock and after the remapping operator is applied.  As expected, after the remapping, each fluid velocity becomes constant and the simulation continues. We plot the fluid velocities corresponding to the discrete velocities between $- 0.041$ and $0.029$.  

\begin{figure}[ht!]
    \centering
    \includegraphics[width=8cm,height=6cm]{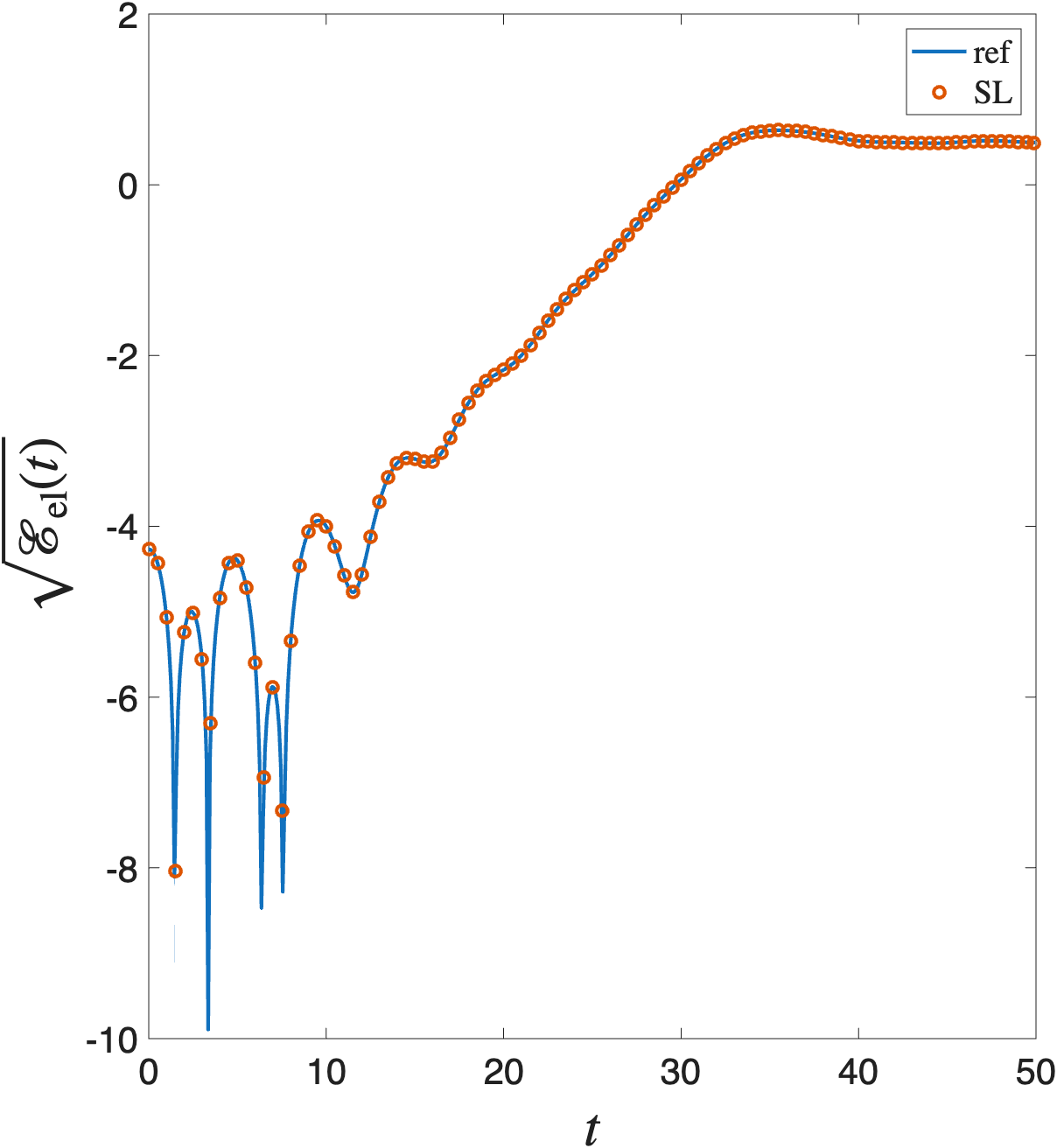}
 \caption{Two stream instability test: time evolution (semi-$\log$ scale) of the (square root of the) electric energy for the new method 'SL'  and the reference method 'ref'.}
 \label{fig:elec_TSI} 
\end{figure}

\begin{figure}[ht!]
    \centering
    \includegraphics[width=7cm, height=6cm]{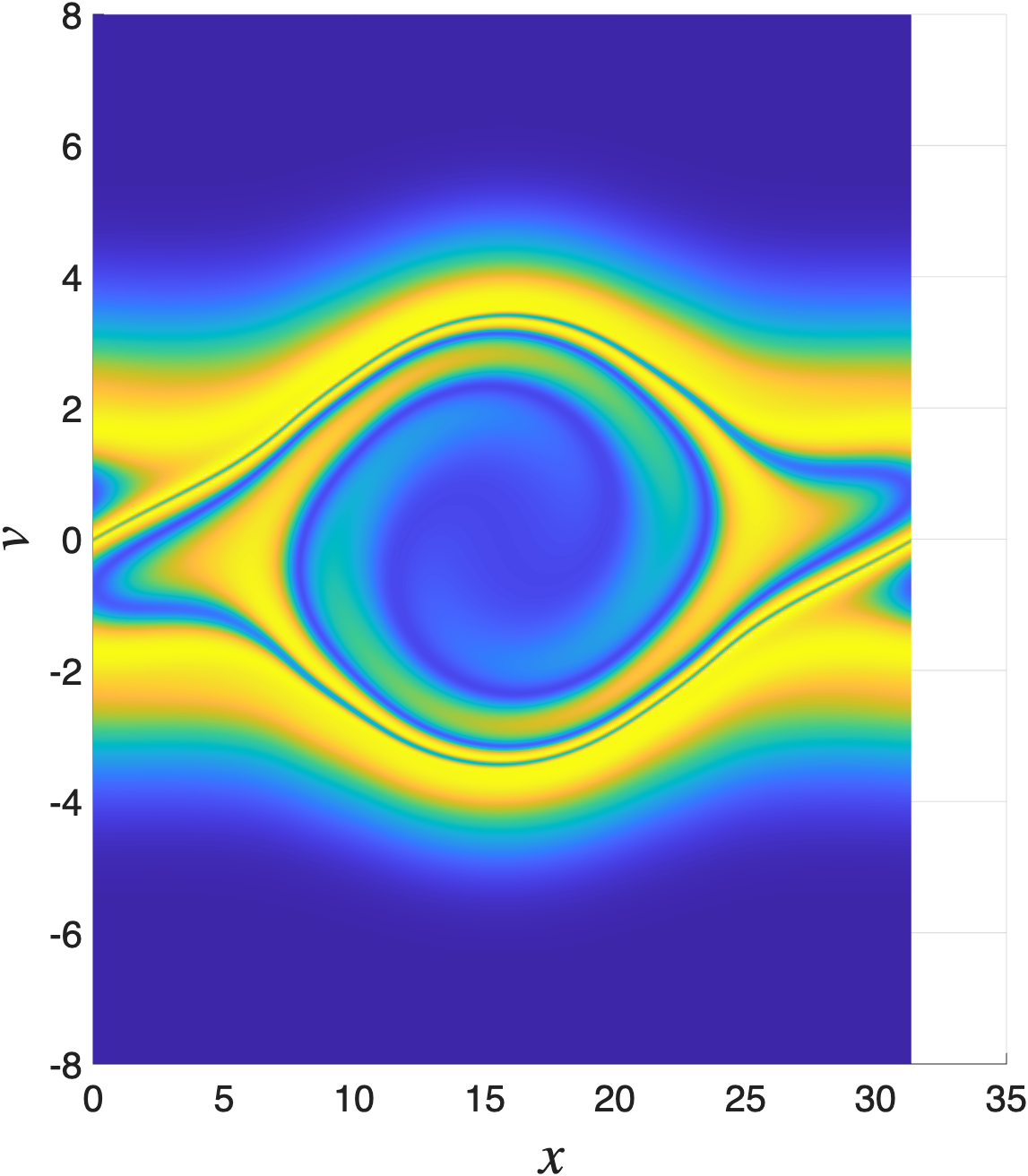}
    \includegraphics[width=7cm, height=6cm]{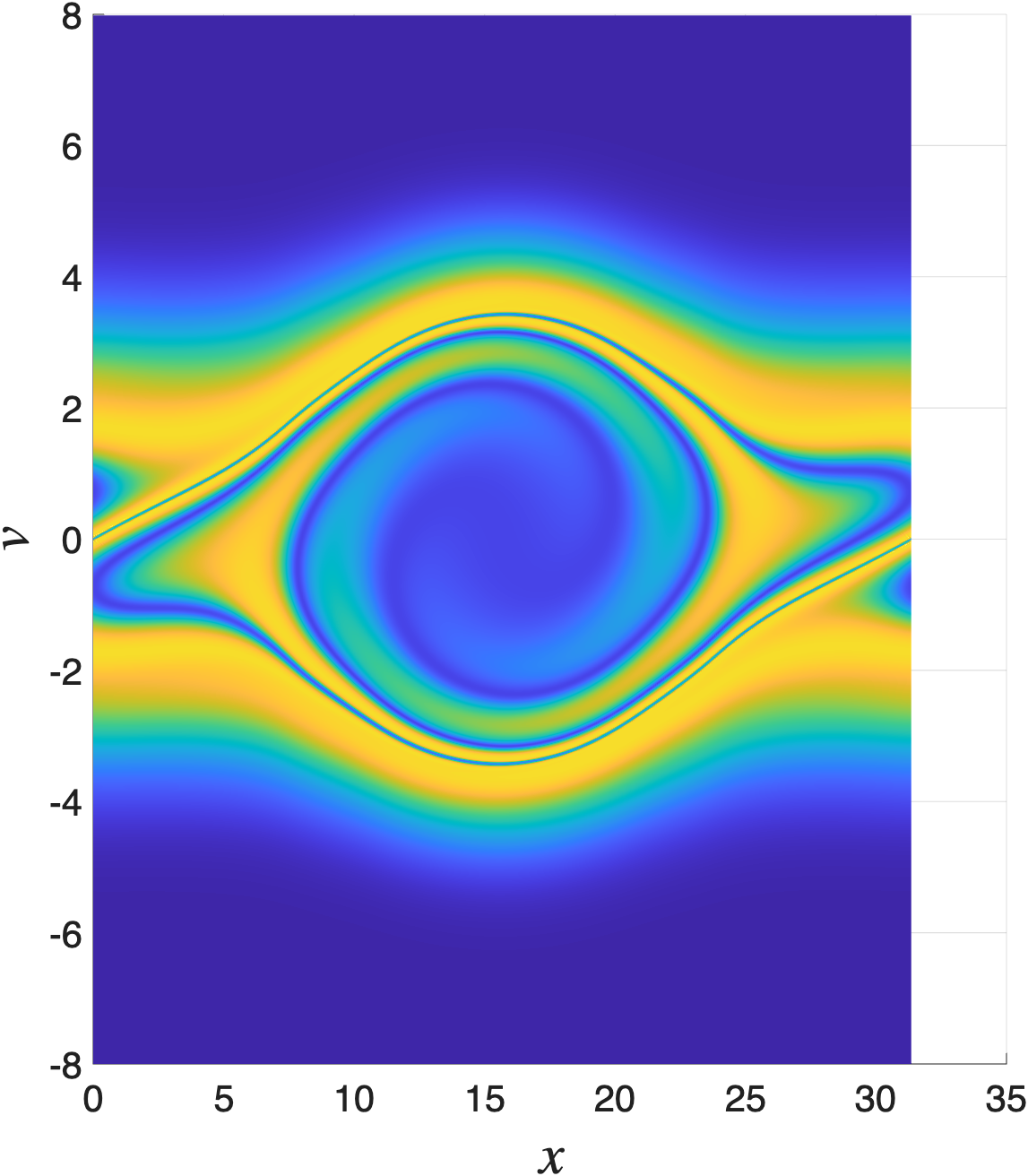}
    \caption{Two stream instability test: Contours of the distribution function at time $T = 50$, for the new method with a mesh  $512$x$1024$ (left) and the reference method with a mesh $512$x$1024$  (right).}\label{fig:f_TSI} 
\end{figure}
\begin{figure}[ht!]
    \centering
    \includegraphics[width=0.45\linewidth]{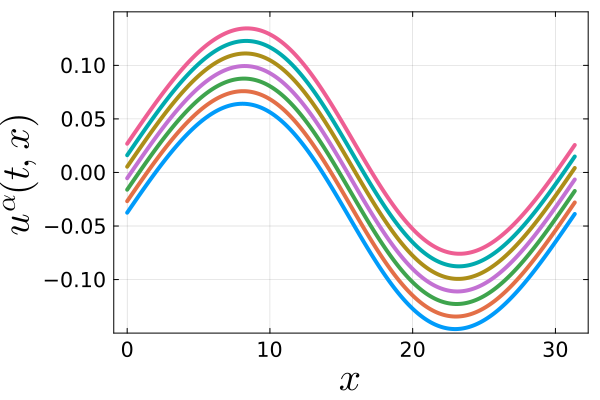}
    \includegraphics[width=0.45\linewidth]{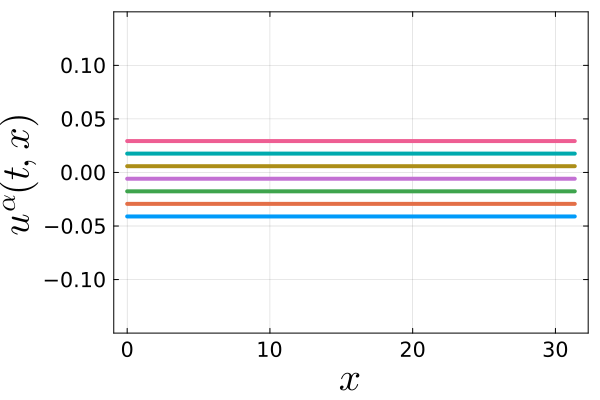}
    \caption{Two stream instability test: Graphs of few velocities before (left) and after (right) the first remapping, for few values of $\ell$.}\label{fig:u_TSI} 
\end{figure}
\newpage
\section{Appendix}
Let $f_0 \in \mathcal{A}$ and $(f,E)$ be the mild solution of the Vlasov--Poisson system given in Theorem 7.4 \cite{bostan-vp-1d}. We shall prove
\begin{align*}
    f \in \mathscr{C}\Big ( [0,T]; L^{1}(\T \times \R) \Big).
\end{align*}
It readily implies the following regularity for the electric field
\begin{align*}
    E \in \mathscr{C}\Big ( [0,T]; L^{1}(\T) \Big).
\end{align*}
Recall that if $f_0$ is a given measurable function then for every $\hat{f_0}$ in the class of $f_0$ there exists a set $\mathcal{N}_{0}$ of zero Lebesgue measure such that $\hat{f_0}$ is defined on $\mathcal{D}_0 := (\T \times \R) \setminus \mathcal{N}_{0}$ and is equal to $f_0.$ Since the flow of the Vlasov--Poisson system is measure preserving for every $t \in [0,T]$ the set $\varphi_{t,0}(\mathcal{N}_{0}) $ is also of zero measure. Therefore, for every $t \in [0,T],$ $f(t) = f_0 \circ \varphi_{0,t}$ is well-defined on $ \mathcal{D}_{t}:=(\T\times \R)\setminus \varphi_{t,0}(\mathcal{N}_{0})$ which is a set of full measure. Fix $s \in [0,T]$ and $(t_{n})_{n \in \N} \subset [0,T]$ such that $t_{n} \longrightarrow s.$ We shall prove that for every $\varepsilon > 0$ there exists $N(\varepsilon, s) \in \N$ such that for every $n \in \N,$
\begin{align}
    n \geq N(\varepsilon, s) \Longrightarrow \| f(s_n) - f(s) \|_{L^{1}(\T \times \R)} < \varepsilon.
\end{align}
We proceed by regularization by convolution and truncation in velocity of the initial data. Since $f_0 \in L^{1} (\T \times \R)$ there exists a sequence $(f_{0}^{k})_{k \in \N} \subset \mathscr{C}_{c}^{\infty}( \T \times \R)$ such that
\begin{align}
    \| f_0^{k} - f_0 \|_{L^1(\T \times \R)} \longrightarrow 0,\\
    \forall k \in \N, \quad \textnormal{ supp } f_0^{k} \subset \T \times [-k,k].
\end{align}
Observe now that for every $n \in \N$ and $k \in \N$, we have the decomposition
\begin{align}
     &f(s_n) - f(s)  = (f_0^{k} \circ \varphi_{0,s_n} - f_0^{k} \circ \varphi_{0,s})\\
     &+(f_0 \circ \varphi_{0,s_n} - f_0^{k} \circ \varphi_{0,s_n}) +(f_0^{k} \circ \varphi_{0,s} - f_0 \circ \varphi_{0,s}).
\end{align}
By triangular inequality we infer
\begin{align*}
    \| f(s_n) - f(s) \|_{L^1} \leq &\| f_0 \circ \varphi_{0,s_n} - f_0^{k} \circ \varphi_{0,s_n} \|_{L^1} \nonumber\\
    &+  \| f_0^{k} \circ \varphi_{0,s} - f_0 \circ \varphi_{0,s} \|_{L^{1}} + \| f_0^{k} \circ \varphi_{0,s_n} - f_0^{k} \circ \varphi_{0,s} \|_{L^1}.\nonumber
\end{align*}
Recall that the flow of the Vlasov--Poisson preserves the $L^{1}$-norm so that
\begin{align}
    \| f_0 \circ \varphi_{0,s_n} - f_0^{k} \circ \varphi_{0,s_n} \|_{L^1} = \| f_0 - f_0^{k} \|_{L^{1}},\\
    \| f_0 \circ \varphi_{0,s} - f_0^{k} \circ \varphi_{0,s} \|_{L^1} = \| f_0 - f_0^{k} \|_{L^{1}}.
\end{align}
Fix $\varepsilon > 0$. Then there exists $N_{1}(\varepsilon) \in \N$ such that for every $k \geq N_{1}(\varepsilon)$, $\| f_0 - f_0^{k} \|_{L^1} < \frac{\varepsilon}{3}.$ So now fix $r \geq N_{1}(\varepsilon).$ We then get for every $n \in \N,$
\begin{align}
    \| f(s_n) - f(s) \|_{L^1}  \leq \frac{2 \varepsilon}{3} + \| f_0^{r} \circ \varphi_{0,s_n} - f_0^{r} \circ \varphi_{0,s} \|_{L^{1}}.
\end{align}
As $f_{0}^{r} \circ \varphi_{0,s_n}$ is a continuous function on $[0,T] \times \T \times \R,$ we have
\begin{align}
   \forall (x,v) \in \T \times \R, \quad f_0^{r} \circ \varphi_{0,s_n}(x,v) \longrightarrow f_0^{r} \circ \varphi_{0,s}(x,v).
\end{align}
To conclude by dominated convergence, it remains to show that  $\textnormal{ supp }f_{0}^{r} \circ \varphi_{0,s_n}$ is embedded in a compact set that is independent of $n$. In this regard, remark that for every $(x,v) \in \T \times \R$ we have for $n \in \N$ large enough,
$$
|v - V(0,s_n,x,v)| \leq s_n \| E \|_{L^{\infty}([0,T] \times \T)} \leq T \| E \|_{L^{\infty}([0,T] \times \T)}.
$$
It yields for $n$ large enough
$$
\textnormal{ supp }f_{0}^{r} \circ \varphi_{0,s_n} \subset \T \times [-(r+ T \| E \|_{L^{\infty}([0,T] \times \T)}), r+T \| E \|_{L^{\infty}([0,T] \times \T)} ].
$$
So the Lebesgue dominated convergence applies. We deduce the existence of $N_2(\varepsilon,s,r) \in \N$ such that for $n \geq N_{2}(\varepsilon,s,r),$
$\| f_0^{r} \circ \varphi_{0,s_n} - f_0^{r} \circ \varphi_{0,s} \|_{L^{1}} < \frac{\varepsilon}{3}$. Thus, for $n \geq \max(N_{1}(\varepsilon);N_{2}(\varepsilon,s,r)))$, $\| f(s_n) - f(s) \|_{L^1} < \varepsilon.$


\bibliographystyle{plain}
\bibliography{biblio}

\end{document}